\documentclass{amsart}
\usepackage{amsmath,amsthm,amsopn,amsfonts,fancyhdr,graphicx,epic,amssymb,epsfig,graphics,psfrag}
\numberwithin{equation}{section}
\vfuzz2pt 
\hfuzz2pt 
\newtheorem{thm}{Theorem}[section]

\newtheorem{prop}{Proposition}[section]
\theoremstyle{definition}
\newtheorem{defn}{Definition}[section]
\newtheorem{rem}{Remark}[section]

\newtheorem{notation}{Notation}[section]


\newcommand{\Real}{\mathbb R}
\newcommand{\N}{\mathbb N}

\newcommand{\normasemplice}[1]{\|#1\|}

\begin{document}

\title[A nonlinear problem involving isotropic deformations]{Existence of solutions to a nonlinear problem involving isotropic deformations}

\author{Ana Cristina Barroso, Gisella Croce, Ana Margarida Ribeiro}

 \address{Maths Dep. and CMAF, University of Lisbon,
Lisbon, Portugal; LMAH, Le Havre University, Le Havre,
France; Maths Dep. and CMA, FCT-Universidade Nova de Lisboa, 
Caparica, Portugal.}

\email{abarroso@ptmat.fc.ul.pt; gisella.croce@univ-lehavre.fr; amfr@fct.unl.pt}

\thanks{}
\subjclass{}

\keywords{Differential inclusion, isotropic set, singular values, rank one convexity, quasiconvexity, polyconvexity.}

\begin{abstract}  
In this paper we show, under suitable hypotheses on the boundary datum $\varphi$, 
existence of Lipschitz maps
$u: \Omega \to \Real^2$ satisfying the nonlinear differential inclusion
\[
\left\{
\begin{array}{ll}
D u \in E, \,\, &\mbox{a.e. in } \Omega
\vspace{0.1cm}\\
u=\varphi, &\mbox{on } \partial \Omega \vspace{0.1cm}
\end{array}
\right.
\]
where $\Omega$ is an open bounded subset of $\Real^2$ and $E$ is a
compact subset of $\Real^{2\times 2}$, which is isotropic, that is to say, invariant under orthogonal
transformations.
Our result relies on an abstract existence theorem due to M\"{u}ller and \v{S}ver\'ak which requires
the set $E$ to satisfy a certain in-approximation property. 
\end{abstract}


\maketitle

\section{Introduction}

A microstructure is a structure on a scale between the macroscopic and the atomic ones. 
Microstructures are abundant in nature, for example, they are present in  molecular tissues or in
biomaterials. Crystals such as igneous rocks or metal alloys (for example nickel-aluminium, zinc-lead) also
develop microstructures. 
The microstructure of a material can strongly influence physical properties
such as strength, toughness, ductility, hardness and corrosion resistance. 
This influence can vary as a function of the temperature of the material.

In the last twenty years successful models for studying the behaviour of 
crystal lattices undergoing 
solid-solid phase transitions have been studied. 
In such models it is assumed that the elements of 
crystal lattices have  
certain preferable affine deformations; this is true for example for 
martensite or for quartz crystals (see \cite{BJ, M}).

Denoting by $E$ the set of matrices corresponding to the gradient of these 
deformations, the physical models  motivate
the mathematical question of the existence of solutions to 
Dirichlet problems related to systems of differential inclusions such as
$Du\in E$ a.e. in $\Omega$, $u = \varphi$ on $\partial \Omega$,
where $\Omega$ is a domain of $\Real^n$ and $E\subset \Real^{n\times n}$ is a 
compact set.

Two abstract theories to establish the existence of solutions of general 
differential inclusion problems are due to
Dacorogna and Marcellini (see \cite{DM, DMacta}), whose result is based on  
Baire's category theorem, and M\"{u}ller and \v{S}ver\'ak \cite{MS, MSannals},
who use ideas of convex integration by Gromov \cite{Gromov}.
In these two theories certain convex hulls of the set $E$ play an important role.
We say that a set $E \subseteq \Real^{n \times n}$ is rank one convex if for every
$\xi, \eta \in E$ such that rank$(\xi-\eta) = 1$ and for every $t \in [0,1]$ then
$$t\xi+(1-t)\eta \in E.$$
In the spirit of the usual definition of the convex hull, the rank one convex hull of a set 
$E$ can be defined as the smallest rank one convex set that
contains $E$. However, in order to solve differential inclusion problems, several authors, namely
M\"{u}ller and \v{S}ver\'ak \cite{MS, MSannals}, consider the following alternative notion of the 
rank one convex hull of a compact set $E\subset\mathbb{R}^{N\times n}$, denoted by $E^{rc}$:
\begin{eqnarray*}
& & E^{rc} =\left\{\xi\in\mathbb{R}^{N\times n}:f(\xi) \leq 0, \text{ for every }\right. \\
& & \left. \hspace{1,5cm} \text{rank one convex function }f\in\mathcal{F}^{E}\right\}
\end{eqnarray*} 
where
$$\mathcal{F}^{E} =\left\{ f:\mathbb{R}^{N\times n}\rightarrow
\mathbb{R}: f\lfloor _{E}\leq0\right\}.$$ 

In each of the aforementioned theories, provided certain approximation properties 
hold, if the gradient of the 
boundary datum $\varphi$ belongs to the interior of the appropriate convex hull of $E$, then 
there exists a solution 
$u\in \varphi+W^{1,\infty}_0(\Omega,\Real^n)$ to $Du\in E$ a.e. in $\Omega$.

Using these abstract theorems various interesting problems related to the existence of microstructures have 
been solved, such as the two-well
problem, where $E=\mathcal{S}\mathcal{O}(2)A \cup \mathcal{S}\mathcal{O}(2)B$, where $A$ and $B$ are two 
fixed $\Real^{2\times 2}$ matrices and  $\mathcal{S}\mathcal{O}(2)$ stands for the special orthogonal group 
(see \cite{DMtwowell, DM, dolzmann, M, MS}).

In this article we study the case where the set $E$ is an arbitrary $\Real^{2\times 2}$  isotropic set, 
that is, invariant under orthogonal transformations. More precisely,
we assume that $E$ is a compact subset 
of $\Real^{2\times 2}$ such that 
$RES\subseteq E$ for every $R,S$ in the orthogonal group $\mathcal{O}(2)$. 
Let $\Omega$ be an open bounded subset of $\Real^2$. We investigate the existence of weakly 
differentiable maps $u:\Omega \to \Real^2$ that satisfy
\begin{equation}\label{problema di partenza}
\left\{
\begin{array}{ll}
D u \in E, \,\, &\mbox{a.e. in } \Omega
\\
u=\varphi, \,\, &\mbox{on } \partial \Omega.
\end{array}
\right. 
\end{equation}
Note that this problem is fully nonlinear. 
Since $E$ is isotropic, it can be written as
\begin{equation}
E=\{\xi \in \Real^{2\times 2}: (\lambda_1(\xi), \lambda_2(\xi))\in K
\}\,,
\end{equation}
for some compact set 
$K\subset \{(x,y)\in \Real^{2}: 0\leq x\leq y\}$, 
where we have denoted by $0 \leq \lambda_1(\xi)\leq \lambda_2(\xi)$
the singular values of the matrix $\xi$, that is, the eigenvalues
of the matrix $\sqrt{\xi \xi^t}$.

Thanks to the properties of the singular values (see Section 3), 
problem (\ref{problema di partenza}) can be rewritten in the
following equivalent way: for almost every $x \in \Omega$ there exists
$(a,b) \in K$ such that
\[
\left\{
\begin{array}{l}
\normasemplice{D u(x)}^2{}=a^2+b^2,
\vspace{0.1cm}\\
|\text{det}\,D u(x)|=ab,
\end{array} \right.
\]
and $u(x)=\varphi(x)$ if $x \in \partial \Omega$.
In the case where $K$ consists of a unique point these two equations are the vectorial eikonal 
equation and the equation of prescribed absolute value of the Jacobian determinant.

The main result of our article is the following
\begin{thm}\label{teorema d'esistenza nel nostro caso sverakmuller} 
Let $E:=\{\xi \in \Real^{2\times 2}: (\lambda_1(\xi),\lambda_2(\xi))\in K \}$ 
where
$ K\subset\{(x,y) \in \Real^2: 0<x \leq y\}$ is a compact set.
Let $\Omega\subset \Real^2$ be a bounded open set and let 
$\varphi \in C^1_{piec}(\overline{\Omega}, \Real^2)$ be such that 
$D \varphi \in E \cup \textnormal{int}E^{rc}$ a.e. in $\Omega$. Then there exists
a map $u\in \varphi+ W^{1,\infty}_0(\Omega, \Real^2)$ such that
$Du \in E$ a.e. in $\Omega$.
\end{thm}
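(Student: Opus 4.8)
The strategy is to realize the hypotheses of the in-approximation theorem of M\"uller and \v{S}ver\'ak \cite{MS, MSannals}. That theorem produces the desired $u\in\varphi+W^{1,\infty}_0(\Omega,\Real^2)$ with $Du\in E$ a.e. as soon as one exhibits a so-called \emph{in-approximation} of $E$ compatible with $\varphi$: a sequence $\{U_i\}_{i\in\N}$ of uniformly bounded open subsets of $\Real^{2\times2}$ such that $U_i\subseteq U_{i+1}^{rc}$ for every $i$, such that $\sup_{\xi\in U_i}\mathrm{dist}(\xi,E)\to0$ as $i\to\infty$, and such that $D\varphi(x)\in U_1$ for a.e. $x\in\Omega$. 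Thus the whole argument reduces to the construction of such a sequence; the analytic content of the differential inclusion is then entirely carried by the abstract theorem, and the remaining task becomes geometric.

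First I would transfer everything to singular-value coordinates. Writing a matrix through its ordered singular values $0\le\lambda_1(\xi)\le\lambda_2(\xi)$ and using the description of the rank one convex hull of isotropic sets obtained in Section 3, the hull $E^{rc}$ is again isotropic and has the form $E^{rc}=\{\xi:(\lambda_1(\xi),\lambda_2(\xi))\in\widehat K\}$ for an explicit compact planar set $\widehat K\subseteq\{0\le x\le y\}$, namely the hull of $K$ with respect to the one-dimensional laminations that rank one connections impose on pairs of singular values. Because the construction commutes with the isotropic structure, it suffices to build a \emph{planar} in-approximation of $K$: nested open sets $O_i\subseteq\{0<x\le y\}$ with $O_i\subseteq\widehat{O_{i+1}}$ (the planar hull of $O_{i+1}$) and $\sup_{O_i}\mathrm{dist}(\cdot,K)\to0$, and then set $U_i=\{\xi:(\lambda_1(\xi),\lambda_2(\xi))\in O_i\}$. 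The hypothesis $K\subseteq\{0<x\le y\}$ keeps $\lambda_1$ bounded away from zero on a neighbourhood of $E$, so that the singular values are smooth there, every matrix of $E$ is invertible, and the sets $U_i$ are genuinely open and uniformly bounded; this yields the boundedness and the limit conditions automatically once $O_i\to K$.

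The crux is the chaining condition $U_i\subseteq U_{i+1}^{rc}$, equivalently $O_i\subseteq\widehat{O_{i+1}}$ while the $O_i$ still collapse onto $K$. Here the two convex functions $\xi\mapsto\lambda_2(\xi)$ and $\xi\mapsto\lambda_1(\xi)+\lambda_2(\xi)$ force one to approach $K$ from \emph{inside} the hull, since these quantities cannot be increased by passing to $U_{i+1}^{rc}$; I would therefore take the $O_i$ inside $\mathrm{int}\,\widehat K$, shrinking towards the portion of $\partial\widehat K$ occupied by $K$. To guarantee $O_i\subseteq\widehat{O_{i+1}}$ I would prove a quantitative, stable refinement of the Section 3 hull computation: the planar hull of a set lying in $\mathrm{int}\,\widehat K$ but sufficiently close to $K$ already recaptures a prescribed compact portion of $\mathrm{int}\,\widehat K$. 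At the matrix level this amounts to realizing interior points as successive rank one laminates of matrices whose singular values sit near $K$, a construction for which isotropy is essential, as it lets one rotate the laminations into the rank one directions dictated by $\lambda_2$ and $\lambda_1+\lambda_2$. I expect this stability of the hull under shrinking towards $K$, together with making the explicit laminations uniform, to be the principal difficulty.

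Finally I would reconcile the gradient with the requirement $D\varphi\in U_1$. Since $D\varphi\in E\cup\mathrm{int}\,E^{rc}$, its singular-value image lies in $K\cup\mathrm{int}\,\widehat K$, which touches $\partial\widehat K$; I would therefore split $\Omega$, using the piecewise $C^1$ regularity of $\varphi$, into the region where $D\varphi\in E$, where the inclusion already holds and $\varphi$ is left unchanged, and the open region $\Omega'$ where $D\varphi\in\mathrm{int}\,E^{rc}$. On $\Omega'$ the image of $D\varphi$ lies in the open set $\mathrm{int}\,E^{rc}$, so choosing $O_1=\mathrm{int}\,\widehat K$ gives $D\varphi\in U_1$ there; exhausting $\Omega'$ by subdomains on which $D\varphi$ is compactly contained in $\mathrm{int}\,E^{rc}$ if necessary, I would apply the in-approximation theorem piece by piece and glue the resulting maps, which agree with $\varphi$ along the interfaces, into a single $u\in\varphi+W^{1,\infty}_0(\Omega,\Real^2)$ with $Du\in E$ a.e.
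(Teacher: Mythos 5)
Your overall architecture is the same as the paper's: reduce to the M\"uller--\v{S}ver\'ak convex integration theorem, build the in-approximation in singular-value coordinates using the isotropic hull characterization, and split $\Omega$ into the closed set where $D\varphi\in E$ and the open remainder. The gap sits exactly at what you call the principal difficulty, and it is twofold. First, your choice $O_1=\mathrm{int}\,\widehat K$, i.e.\ $U_1=\mathrm{int}\,E^{rc}$, is delicate: for any admissible sequence the polyconvex test functions $f_\theta(\lambda_1,\lambda_2)=\lambda_1\lambda_2+\theta(\lambda_2-\lambda_1)$ propagate the bound $\sup_{U_i}f_\theta\circ(\lambda_1,\lambda_2)\le\max_K f_\theta$ back along the chain, so no single compact subset of $U_2$ has a hull containing all of $\mathrm{int}\,E^{rc}$; your stability lemma as stated (the hull of a set near $K$ ``recaptures a prescribed \emph{compact} portion'' of the interior) yields $U_1\subseteq U_2^{rc}$ only if one exploits that $U_2^{rc}$ is, by definition, the union of $C^{rc}$ over \emph{all} compact $C\subset U_2$, with the capturing compact depending on the point. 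That pointwise capture statement is precisely the content of the paper's Theorem~\ref{datoalbordo}, which you assert but do not prove. The paper organizes it differently: for a fixed $\xi\in\mathrm{int}\,E^{rc}$ it finds $N(\xi)$ with $\xi\in (C_N)^{rc}\subseteq(U_N)^{rc}$ and uses the truncated sequence $(U_N)^{rc},U_{N+1},U_{N+2},\dots$, whose chaining requires the idempotence $(U^{rc})^{rc}=U^{rc}$ for open, bounded, isotropic sets (Theorem~\ref{Erc=Epc}~(ii)) --- a nontrivial fact resting on the Cardaliaguet--Tahraoui equivalence of rank one convexity and polyconvexity, entirely absent from your sketch.

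Second, the mechanism you propose for the stability lemma --- realizing interior points as explicit successive rank one laminates of matrices with singular values near $K$ --- is not the paper's argument and is harder than needed: carried out, it would compute the lamination hull $\operatorname{Rco}E$, which the paper deliberately avoids (Remark~\ref{Ksingularset}~(ii): the identity $E^{rc}=\operatorname{Rco}E$ holds here only by Croce's separate result, and in general $\operatorname{Rco}E\subsetneq E^{rc}$, cf.\ (\ref{rhulls})). The paper's proofs are instead function-theoretic: the chaining $U_n\subseteq U_{n+1}^{rc}$ (Theorem~\ref{inapproximationteoexistence}, Step~1) follows by placing the rectangles $R^n_{(a,b)}$ strictly below and to the left of $(a,b)$ and invoking the single-point hull formula $\left\{\lambda_1\lambda_2\le a'b',\ \lambda_2\le b'\right\}$ for the isotropic set with prescribed singular values $(a',b')$; and the capture of a fixed $\xi\in\mathrm{int}\,E^{rc}$ is proved via uniform convergence of $\max_{K_n}f_\theta$ to $\max_K f_\theta$ in $\theta$, with a small but necessary care about the admissible range $\theta\in[0,\max_{K_N}b]$ versus $[0,\max_K b]$. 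Until your laminate sketch is replaced by such an argument (or the uniform laminate construction is actually carried out), the proposal has a genuine hole at its core. Two smaller points: the M\"uller--\v{S}ver\'ak theorem delivers a fine $C^0$-approximation, and extracting the boundary condition $u\in\varphi+W^{1,\infty}_0$ needs the extra step with $\varepsilon$ vanishing on $\partial\Omega$ (paper's Theorem~\ref{affine}); and on the region where $D\varphi\in\mathrm{int}\,E^{rc}$ the paper first passes to piecewise affine data via Dacorogna--Marcellini (Corollary 10.15) and solves affine problems piecewise, whereas your exhaust-and-glue scheme applies the convex integration theorem to non-affine data directly --- viable in principle, but again leaning on the unproven uniform version of the stability lemma.
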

To prove this theorem we use the following characterization of $E^{rc}$ which can be obtained using results of
Cardaliaguet and Tahraoui \cite{CT} on isotropic sets (see Section 3 for more details):
\begin{eqnarray*}
& & E^{rc} = \Big\{\xi \in \Real^{2\times 2} : 
f_{\theta}(\lambda_1(\xi),\lambda_2(\xi)) 
\\
& & \hspace{2cm}\leq  \left. \max\limits_{(a,b) \in K}f_{\theta}(a,b), 
\,\forall\,\,\theta \in [0,\max\limits_{(a,b)\in K}b]\right\}
\end{eqnarray*}
where
$$
f_{\theta}(x,y) := xy + \theta(y-x), \, 0 < x \leq y, \, \theta \geq 0.
$$
This characterization is only known to
hold in dimension $2\times 2$ and this is the reason our analysis is restricted to this case.  

The above existence theorem was first 
obtained by Croce in \cite{C}
using the theory developed by Dacorogna and Marcellini and a refinement due to Dacorogna and Pisante \cite{DP}.
In this article we treat the same problem using the theory by M\"{u}ller and 
\v{S}ver\'ak, which leads to different technical difficulties, nevertheless we arrive at the same result.
Notice that our only restriction on the compact, isotropic set $E$ is that it contains no singular matrices. 
The reason for this is to be able to construct an in-approximation sequence for $E$ (see Section 4).
This restriction was already present in \cite{C}.
We point out that in the case where 
$K$ consists of a unique point and $K\subset \Real^n$, $n\geq 2$ 
the same existence result was obtained by Dacorogna and Marcellini in \cite{DM}. 

\section{Notions of Convexity}

In this section we gather together some generalized convexity notions and properties 
which will be useful in the sequel. For more details on this matter we refer to
\cite{D} and \cite{DR}.

\begin{notation} For $\xi\in\mathbb{R}^{N\times n}$ we let
\[
T\left(  \xi\right)  =\left(
\xi,\mathrm{adj}_{2}\xi,\ldots,\mathrm{adj}_{N\wedge n}\xi\right)
\in\mathbb{R}^{\tau(N,n)}
\]
where $\mathrm{adj}_{s}\xi$ stands for the matrix of all $s\times
s$ subdeterminants of the matrix $\xi,$ $1\leq s\leq N\wedge
n=\min\left\{  N,n\right\}  $ and where
\[
\tau=\tau\left(  N,n\right)  =\underset{s=1}{\overset{m\wedge
n}{\sum}}\binom {N}{s}\binom{n}{s}
 \text{ and }\binom{N}{s}
 =\frac{N!}{s!\left(  N-s\right)  !}.
\]
In particular, if $N=n=2,$ then $T\left(  \xi\right)  =\left(
\xi,\det \xi\right).$
\end{notation}

\begin{defn}\label{convfunctions}
\begin{itemize}
\item[(i)] A function 
$f:\mathbb{R}^{N\times n}\rightarrow\mathbb{R}\cup\left\{  +\infty\right\}$ 
is said to be \emph{polyconvex} if there exists a convex function
$g:\mathbb{R}^{\tau(N,n)}\longrightarrow
\mathbb{R}\cup\left\{+\infty\right\}$ such that
$$f(\xi)=g(T(\xi)).$$
\item[(ii)] A Borel measurable function 
$f:\mathbb{R}^{N\times n}\rightarrow \mathbb{R}$ is said to be 
\emph{quasiconvex} if
\[
f\left( \xi\right)\,\mathrm{\operatorname*{meas}}(U)\leq
\int_{U}f\left(  \xi+D\varphi\left(x\right)  \right)  dx
\]
for every bounded open set $U\subset\mathbb{R}^{n}$,
$\xi\in\mathbb{R}^{N\times n}$ and 
$\varphi\in W_{0}^{1,\infty}\left(  U;\mathbb{R}^{N}\right)$.
\item[(iii)] A function 
$f:\mathbb{R}^{N\times n}\rightarrow\mathbb{R}\cup\left\{  +\infty\right\}$ 
is said to be \emph{rank one convex} if
\[
f\left( t\xi+(1-t)\eta\right)  \leq 
t\,f\left(\xi\right) +\left(  1-t\right) \,f\left( \eta\right)
\]
for every $t\in\left[  0,1\right] $ and every
$\xi,\eta\in\mathbb{R}^{N\times n}$ with
$\operatorname*{rank}(\xi-\eta)  =1$.
\end{itemize}
\end{defn}

It is well known that, if $f : \mathbb{R}^{N\times n}\rightarrow\mathbb{R}$, then
$$f \, \textnormal{polyconvex} \Rightarrow f \, \textnormal{quasiconvex} \Rightarrow
f \, \textnormal{rank one convex}.$$

\begin{defn}\label{definitiongeneralizedconvexities}
\begin{itemize}
\item[(i)] We say that $E\subset\mathbb{R}^{N\times n}$ is
\emph{polyconvex} if 
$$\left.\begin{array}{c}\vspace{0.2cm}\displaystyle{\sum_{i=1}^{\tau+1}t_i 
T(\xi_i)=T\left(\sum_{i=1}^{\tau+1}t_i \xi_i\right)}\\
\xi_i\in E,\ t_i\ge 0,\ \displaystyle{\sum_{i=1}^{\tau+1}t_i=1}
\end{array}\right\}\Rightarrow \sum_{i=1}^{\tau+1}t_i \xi_i\in
E.$$ 
In the case $N=n=2$ we recall that $\tau+1=5$ and $E$ is polyconvex if
for all $t_i \geq 0$ with
$\displaystyle \sum_{i=1}^{5}t_i = 1$ and for all $\xi_i \in E$ with
$$\sum_{i=1}^5t_i\det \xi_i = \det\left(\sum_{i=1}^5t_i\xi_i\right)$$
then $\displaystyle \sum_{i=1}^5t_i\xi_i \in E$.
\item[(ii)] Let $E\subset\mathbb{R}^{N\times n}$. We say that $E$ is
\emph{rank one convex} if for every $\xi,\eta\in E$ such that
$\operatorname*{rank}(\xi-\eta)=1$ and for every $t \in [0,1]$ then
$$t \xi+(1-t)\eta\in E.$$
\end{itemize}
\end{defn}

In the following theorem we mention some properties of polyconvex and rank one
convex sets that can be found in the literature. Properties (i) and (iii) were 
proved in \cite{DR}, see also \cite{D}, whereas (iv) is well 
known. The characterization given in (ii) corresponds to the definition of 
polyconvex set used by Cardaliaguet and Tahraoui in \cite{CT}. 

\begin{thm}\label{propertiesconvex}
Let $E\subset\mathbb{R}^{N\times n}$.
\begin{itemize}
\item[(i)] The set $E$ is polyconvex if and only if 
$$E=\{\xi\in\mathbb{R}^{N\times n}: T(\xi)\in \operatorname*{co}T(E)\},$$ 
where $\operatorname*{co}T(E)$ denotes the convex hull of $T(E)$.
\item[(ii)] If $E$ is compact, then it is polyconvex if and only if there exists a 
polyconvex function $f:\mathbb{R}^{N\times n}\rightarrow\mathbb{R}$ such that 
$$E=\{\xi\in\mathbb{R}^{N\times n}:\ f(\xi)\le 0\}.$$
\item[(iii)] If $E$ is polyconvex (respectively, rank one convex) then 
$\mathrm{int}\,E$ is also polyconvex (respectively, rank one convex). However, even 
if $N=n=2$, $\overline{E}$ is not necessarily polyconvex (respectively, rank one 
convex).
\item[(iv)] If $E$ is polyconvex then it is rank one convex. 
\end{itemize}
\end{thm}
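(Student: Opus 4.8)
The plan is to treat the four items in the order (i), (iv), (ii), (iii), since the later parts lean on the earlier ones and on two structural facts: the first block of $T$ is the identity map, and $T$ is affine along rank one directions. For (i) I would argue by double inclusion. The inclusion $E\subseteq\{\xi:T(\xi)\in\operatorname{co}T(E)\}$ is immediate, since $T(\xi)\in T(E)\subseteq\operatorname{co}T(E)$. For the reverse inclusion, given $T(\xi)\in\operatorname{co}T(E)\subseteq\mathbb{R}^{\tau}$, Carathéodory's theorem lets me write $T(\xi)=\sum_{i=1}^{\tau+1}t_i T(\xi_i)$ with $\xi_i\in E$, $t_i\ge 0$ and $\sum t_i=1$. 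Reading off the first block of $T$, which equals the matrix itself, forces $\xi=\sum t_i\xi_i$, so the consistency identity $\sum t_i T(\xi_i)=T(\sum t_i\xi_i)$ holds and polyconvexity of $E$ yields $\xi\in E$. Running the same computation backwards shows that the set identity forces $E$ to be polyconvex, completing (i).

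For (iv), the crucial observation is that whenever $\operatorname{rank}(\xi-\eta)=1$ every $s\times s$ subdeterminant, and hence $T$ itself, is affine along the segment $t\mapsto\eta+t(\xi-\eta)$: the coefficient of $t^k$ for $k\ge 2$ in any minor would involve a $(\ge 2)\times(\ge 2)$ minor of the rank one matrix $\xi-\eta$, which vanishes. Thus $T(t\xi+(1-t)\eta)=tT(\xi)+(1-t)T(\eta)$, and taking $\xi_1=\xi$, $\xi_2=\eta$ with weights $t,1-t$ (the remaining weights being zero) exactly realizes the consistency condition in the definition of a polyconvex set; hence $t\xi+(1-t)\eta\in E$ and $E$ is rank one convex. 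For (ii), the implication ``polyconvex function $\Rightarrow$ polyconvex set'' requires no compactness: writing $f=g\circ T$ with $g$ convex, any admissible combination gives $g(T(\sum t_i\xi_i))=g(\sum t_i T(\xi_i))\le\sum t_i g(T(\xi_i))=\sum t_i f(\xi_i)\le 0$. For the converse I would invoke (i) together with compactness: $T(E)$ is compact, so $C:=\operatorname{co}T(E)$ is compact and in particular closed, and $f(\xi):=\operatorname{dist}(T(\xi),C)$ is polyconvex (a convex distance function precomposed with $T$) and satisfies $\{f\le 0\}=\{\xi:T(\xi)\in C\}=E$ by (i).

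For (iii), the rank one case is a short translation argument. If $\xi,\eta\in\operatorname{int}E$ are rank-one connected, choose $\delta>0$ with $B_\delta(\xi),B_\delta(\eta)\subseteq E$; then for every $w$ with $|w|<\delta$ the matrices $\xi+w,\eta+w$ lie in $E$ and differ by the rank one matrix $\xi-\eta$, so rank one convexity of $E$ gives $t\xi+(1-t)\eta+w\in E$, whence a whole $\delta$-ball around $t\xi+(1-t)\eta$ lies in $E$ and $t\xi+(1-t)\eta\in\operatorname{int}E$. The polyconvex case is the step I expect to be the main obstacle, because this translation trick breaks down: $T$ is nonlinear, so translating the $\xi_i$ by $w$ destroys the consistency condition $\sum t_i T(\xi_i)=T(\sum t_i\xi_i)$. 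Here I would instead use that $T$ is a homeomorphism onto the minors variety $\mathcal{T}=T(\mathbb{R}^{N\times n})$, under which (i) reads $T(E)=C\cap\mathcal{T}$ and $T(\operatorname{int}E)$ is the relative interior of $T(E)$ in $\mathcal{T}$, and then carry out the convex-geometric argument of Dacorogna and Ribeiro \cite{DR} relating $\operatorname{int}\operatorname{co}T(E)$ to $\operatorname{co}T(\operatorname{int}E)$. Finally, the non-closure claim is settled by exhibiting, or citing from \cite{DR}, an explicit polyconvex (respectively rank one convex) set in $\mathbb{R}^{2\times 2}$ whose closure acquires a boundary point violating the corresponding convexity condition.
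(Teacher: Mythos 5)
The paper itself offers no proof of Theorem~\ref{propertiesconvex}: it explicitly quotes (i) and (iii) from \cite{DR} (see also \cite{D}), calls (iv) well known, and identifies (ii) with the definition of polyconvex set used in \cite{CT}. Measured against those sources, your arguments for (i), (ii), (iv) and for the rank one half of (iii) are correct, complete, and essentially the standard ones: Carath\'eodory's theorem in $\mathbb{R}^{\tau(N,n)}$ plus projection onto the first block of $T$ for (i); the function $f(\xi)=\operatorname{dist}\bigl(T(\xi),\operatorname{co}T(E)\bigr)$ for (ii), where compactness of $E$ is used exactly where it should be, namely to make $\operatorname{co}T(E)$ closed so that $\{f\le 0\}=\{\xi: T(\xi)\in\operatorname{co}T(E)\}=E$; affinity of $T$ along rank one segments for (iv); and the common-translation ball argument for rank one convexity of $\operatorname{int}E$. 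Leaving the counterexamples about closures to a citation of \cite{DR} matches the paper's own level of detail.

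The genuine gap is the polyconvex half of (iii). You abandon the translation trick on the grounds that $T$ is nonlinear, so that ``translating the $\xi_i$ by $w$ destroys the consistency condition'' --- but this diagnosis is wrong, and the translation trick is precisely what makes the standard proof work. For fixed $w$, the generalized Laplace expansion of an $s\times s$ minor of $\xi+w$, splitting its rows between rows of $\xi$ and rows of $w$, expresses it as a linear combination of minors of $\xi$ of orders $0,1,\dots,s$ with coefficients that are minors of $w$; hence $T(\xi+w)=A_w\,T(\xi)+b_w$ is an \emph{affine} function of $T(\xi)$. Consequently $\sum t_i T(\xi_i)=T\bigl(\sum t_i\xi_i\bigr)$ implies $\sum t_i T(\xi_i+w)=T\bigl(\sum t_i\xi_i+w\bigr)$ for every $w$ (in the $2\times 2$ case this is just the bilinearity of $(\xi,w)\mapsto\det(\xi+w)-\det\xi-\det w$), and your $\delta$-ball argument transfers verbatim: pick $\delta$ with $B_\delta(\xi_i)\subseteq E$ for the finitely many $\xi_i$ in the admissible combination, translate all of them by an arbitrary $w$ with $|w|<\delta$, and apply the polyconvexity of $E$ to conclude $B_\delta\bigl(\sum t_i\xi_i\bigr)\subseteq E$. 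By contrast, your proposed substitute --- passing to the minors variety $\mathcal{T}=T(\mathbb{R}^{N\times n})$ and then ``carrying out the convex-geometric argument of \cite{DR}'' relating $\operatorname{int}\operatorname{co}T(E)$ to $\operatorname{co}T(\operatorname{int}E)$ --- is not an argument but a pointer back to the very reference whose proof you are meant to supply; moreover, while the identification of $T(\operatorname{int}E)$ with the interior of $T(E)$ in the subspace topology of $\mathcal{T}$ is legitimate (since $T$ is a homeomorphism onto its closed image), that interior has no evident relation to the ambient interior of the convex set $\operatorname{co}T(E)$, so it is not clear the plan can be executed as stated. Replacing that paragraph by the affine-translation lemma above closes the gap and makes the whole proof self-contained.
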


\begin{rem}
{\rm The converse of (iv) is, in general, 
false. }
\end{rem}

The concepts of polyconvexity and rank one convexity for sets were introduced as a tool for
solving differential inclusion problems through the notion of convex hull in these generalized senses.
This lead to different definitions of these hulls that can be found in the literature. The ones we give in 
Definition~\ref{DMhulls} are the natural ones in the spirit of the classical notion of convex hull. 
These hulls were considered by Dacorogna and Marcellini to establish their abstract existence theorem 
for differential inclusions of the type we are considering. This was the theory used by Croce in \cite{C}.
In Definition~\ref{MShulls} we recall the notions of polyconvex and rank one convex hulls of a given set
used by M\"uller and {\v{S}}ver{\'a}k \cite{MSannals} in their convex integration method. These are the 
notions we will use in our existence result in Sections 4 and 5.

\begin{defn}\label{DMhulls}
The \emph{polyconvex} and the \emph{rank one convex} hull of a set
$E\subset\mathbb{R}^{N\times n}$ are, respectively, the smallest
polyconvex and rank one convex sets containing $E$ and are respectively denoted by
$\operatorname*{Pco}E$ and $\operatorname*{Rco}E$.
\end{defn}

From Theorem \ref{propertiesconvex}, the following inclusions hold
$$E\subset \operatorname*{Rco}E\subset \operatorname*{Pco}E\subset 
\operatorname*{co}E,$$ where $\operatorname*{co}E$ denotes the convex hull of $E$.
 
It was proved by Dacorogna and Marcellini in \cite{DM} that 
\begin{eqnarray}
& & \operatorname*{Pco}E =\left\{\xi\in\mathbb{R}^{N\times n}:\,
\displaystyle T(\xi)=\sum_{i=1}^{\tau+1}t_i T(\xi_i), \right. \nonumber \\ 
\label{pco} 
& & \hspace{2,5cm} \left.\xi_i\in E, \ t_i\ge 0, \ \sum_{i=1}^{\tau+1}t_i=1\right\}. 
\end{eqnarray}

One has (see \cite{DR}) that $\operatorname*{Pco}E$ and 
$\operatorname*{Rco}E$ are open if $E$ is open, and $\operatorname*{Pco}E$ is 
compact if $E$ is compact. However, it isn't true that $\operatorname*{Rco}E$ is 
compact if $E$ is compact (see \cite{kolar}).
 
It is well known that, for $E\subset\mathbb{R}^{N\times n}$,
\begin{eqnarray}\label{coE}
& & \operatorname*{co}E =\left\{\xi\in\mathbb{R}^{N\times n}: f(\xi) \leq0, 
\right.\\ \nonumber
& & \left.\hspace{2cm}  \text{for every convex function }
f\in{\mathcal{F}}^{E}_\infty\right\} \\ \label{closurecoE} 
& & \overline{\operatorname*{co}E}  =\left\{ \xi\in\mathbb{R}^{N\times n}: 
f(\xi)  \leq0, \right. \\ \nonumber
& & \left. \hspace{2cm} \text{for every convex function }f\in\mathcal{F}^{E}\right\}
\end{eqnarray}
where $\overline{\operatorname*{co}E}$ denotes the closure of the
convex hull of $E$ and  
\begin{align*}
{\mathcal{F}}^{E}_\infty  &  =\left\{  f:\mathbb{R}^{N\times n}
\rightarrow\mathbb{R}\cup\left\{ +\infty\right\}
:\left.  f\right\vert _{E}\leq0\right\} \\
\mathcal{F}^{E}  &  =\left\{  f:\mathbb{R}^{N\times n}\rightarrow
\mathbb{R}:\left.  f\right\vert _{E}\leq0\right\}  .
\end{align*}
  
Analogous representations to $(\ref{coE})$ can be obtained in the
polyconvex and rank one convex cases: 
\begin{eqnarray*}
& & \operatorname*{Pco}E  = \left\{ \xi\in\mathbb{R}^{N\times n}:
f(\xi) \leq0, \right. \\ 
& & \left. \hspace{1,6cm} \text{for every polyconvex function }
f\in{\mathcal{F}}^{E}_\infty\right\}, \\
& & \operatorname*{Rco}E =\left\{ \xi\in\mathbb{R}^{N\times n}:
f(\xi) \leq0, \right. \\
& & \left. \hspace{0,8cm}\text{for every rank one convex function }
f\in{\mathcal{F}}^{E}_\infty\right\}.
\end{eqnarray*}
However,
$(\ref{closurecoE})$ can only be generalized to the polyconvex
case if the sets are compact, in the rank one convex case
$(\ref{closurecoE})$ is not true, even if compact sets are
considered (see (\ref{phulls}) and (\ref{rhulls})).
This shows that the hulls considered in Definitions~\ref{DMhulls} and \ref{MShulls}
are, in fact, different.

\begin{defn}\label{MShulls}
If $E\subset\mathbb{R}^{N\times n}$ is a compact set,
\begin{eqnarray*}
& & E^{pc} =\left\{\xi\in\mathbb{R}^{N\times n}:f(\xi) \leq 0, \right. \\
& & \left. \hspace{1,7cm} \text{for every polyconvex function }f\in\mathcal{F}^{E}\right\}, \\
& & E^{rc} =\left\{\xi\in\mathbb{R}^{N\times n}:f(\xi) \leq 0, \right. \\
& & \left. \hspace{0,8cm} \text{for every rank one convex function }f\in\mathcal{F}^{E}\right\}.
\end{eqnarray*} 
If $U\subset\mathbb{R}^{N\times n}$ is an open set,
\begin{align*}
U^{pc} &  =\bigcup_{\substack{E\subset U\\E\text{ compact}}} E^{pc}  \\
U^{rc}  &  =\bigcup_{\substack{E\subset U\\E\text{ compact}}} E^{rc}.
\end{align*} 
\end{defn}

\begin{rem}\label{remarkshulls}
{\rm Clearly  $E^{rc} \subseteq E^{pc}$ and $U^{rc} \subseteq U^{pc}$.
Notice that $E^{pc}$ and $E^{rc}$ are closed sets and $U^{pc}$ and $U^{rc}$ are open sets.
Also, for compact sets $E$, these hulls coincide with what is 
denoted by $\operatorname*{Pco}\nolimits_{f}E$ and 
$\operatorname*{Rco}\nolimits_{f}E$ in \cite{D, DR}.

Both in the open and the compact cases the above sets are, respectively, polyconvex 
and rank one convex.}
\end{rem}

We next point out the relations between the closures of the convex hulls
and the sets introduced in the above definition.

If $E$ is a compact set, then
\begin{equation}\label{phulls}
\operatorname*{Pco}E=\overline{\operatorname*{Pco}E}=E^{pc},
\end{equation} 
but, in general,
\begin{equation}\label{rhulls}
\operatorname*{Rco}E\subsetneq\overline{\operatorname*{Rco}E}
\subsetneq E^{rc}.
\end{equation}
However, in some cases these sets coincide (see the following section for 
more details).

If $U$ is an open set, then
\begin{equation}\label{openphulls}
U^{pc}=\operatorname*{Pco}U
= \bigcup_{\substack{E\subset U\\E\text{ compact}}} \operatorname*{Pco}E
\subset\overline{\operatorname*{Pco}U}
\end{equation}
but, in general,
$$U^{rc} \supsetneq \operatorname*{Rco}U  = 
\bigcup_{\substack{E\subset U\\E\text{ compact}}} \operatorname*{Rco}E.$$

\section{Properties of Isotropic Sets}

Our aim in this article is to study a differential inclusion problem involving
a compact set $E$ which is isotropic, that is, invariant
under orthogonal transformations. Therefore in this section we mention  
some results on isotropic subsets of $\Real^{n \times n}$. However 
Theorems \ref{Erc=Epc} and \ref{caracEpc} are established for $n = 2$, since they rely on results of
Cardaliaguet and Tahraoui which are known only in dimension $2\times 2$. These results are crucial to
obtain our existence theorem which holds, thus, only in $\Real^{2\times 2}$.

\begin{defn}
Let $E$ be a subset of $\Real^{n\times n}$. We say $E$ is isotropic if 
$RES\subseteq E$ for every $R, S$ in the orthogonal group 
$\mathcal{O}(n)$. 
\end{defn}

We denote by $0 \leq \lambda_1(\xi)\leq \cdots \leq \lambda_n(\xi)$
the singular values of the matrix $\xi$, that is, the eigenvalues
of the matrix $\sqrt{\xi \xi^t}$. 

We recall the following properties on the singular values:
$$
\begin{array}{c}
\displaystyle \prod_{i=1}^n\lambda_i(\xi)=|\det \xi|
\\
\displaystyle \sum_{i=1}^n(\lambda_i(\xi))^2=\normasemplice{\xi}^2.
\end{array}
$$
From these properties it follows that, in the $2 \times 2$ case,
$\lambda_1$ and $\lambda_2$ are given by
$$
\begin{array}{l}
\displaystyle \lambda_1(\xi)=\frac 12
\left[\sqrt{\normasemplice{\xi}^2+2
|\det \xi|}-\sqrt{\normasemplice{\xi}^2-2 |\det \xi|}\right]
\vspace{0.15cm}
\\
\displaystyle \lambda_2(\xi)=\frac 12
\left[\sqrt{\normasemplice{\xi}^2+2
|\det \xi|}+\sqrt{\normasemplice{\xi}^2-2 |\det \xi|}\right].
\end{array}
$$ 
In addition, for every $\xi \in \Real^{n \times n}$, $R, S \in \mathcal{O}(n)$
$$\lambda_i(\xi) = \lambda_i(R\xi S),$$ 
$\lambda_i$ are continuous functions, 
$\displaystyle \prod_{i=k}^n\lambda_i(\xi)$ is 
polyconvex for any $1 \leq k \leq n$ and $\lambda_n$ is a norm.
Moreover the following decomposition holds (see \cite{HJ}): for every matrix $\xi$ 
there exist $R,S \in \mathcal{O}(n)$ such that
\begin{eqnarray*}
\xi&=&R\left(\begin{array}{ccc}
\lambda_1(\xi) &  & \\
 & \ddots & \\
 & & \lambda_n(\xi)
\end{array}\right)S \\
&=& R \, {\rm diag}(\lambda_1(\xi), \cdots, \lambda_n(\xi)) S.
\end{eqnarray*}
Due to these properties, any isotropic set $E$ may be written in the form 
\begin{equation}\label{isotropic}
\{\xi \in \Real^{n\times n}: (\lambda_1(\xi),\cdots, \lambda_n(\xi))\in \Gamma\}
\end{equation}
where $\Gamma$ is a set contained in 
$\{(x_1, \cdots,x_n)\in \Real^n: 0\leq x_1 \leq \cdots \leq x_n\}$. 
Clearly if $\Gamma$ is compact (respectively, open) then $E$ is also compact
(respectively, open).
On the other hand, if $E$ is compact the set $\Gamma$ can be taken to be compact and if 
$E$ is open (\ref{isotropic}) holds for an open set $\Gamma \subset \Real^n$. 

In the following theorems we establish some results on isotropic sets.

\begin{thm}\label{Ercisotropic}
\begin{itemize}
\item[(i)] If $E \subseteq \Real^{n\times n}$ is isotropic then $\operatorname*{Pco}E$ is isotropic.
\item[(ii)] If $U \subseteq \Real^{n\times n}$ is open, bounded and isotropic then
$$U^{pc} = \bigcup_{\substack{E\subset U\\E\text{ compact}
\\E\text{ isotropic}}} E^{pc}$$
and
$$U^{rc} = \bigcup_{\substack{E\subset U\\E\text{ compact}
\\E\text{ isotropic}}} E^{rc}.$$
\item[(iii)] If $E \subseteq \Real^{n\times n}$ is compact and isotropic then $E^{rc}$ 
is compact and isotropic.
\end{itemize}
\end{thm}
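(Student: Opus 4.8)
The plan is to treat the three parts separately, using throughout that for fixed $R,S\in\mathcal{O}(n)$ the map $L_{R,S}(\xi):=R\xi S$ is an invertible linear self-map of $\Real^{n\times n}$ which (a) sends each rank-one segment affinely onto a rank-one segment with the same parameter, since $R(a\otimes b)S=(Ra)\otimes(S^t b)$, and (b) maps $E$ onto $E$ and $U$ onto $U$: indeed isotropy gives $RES\subseteq E$, and applying this to $R^t,S^t\in\mathcal{O}(n)$ yields $R^tES^t\subseteq E$, whence $E=R(R^tES^t)S\subseteq RES$, so $L_{R,S}E=E$ (and identically $L_{R,S}U=U$). The technical heart of the theorem is part (i); parts (ii) and (iii) are more routine once (i) and these two observations are in hand.

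For (i) I would invoke the representation $\operatorname*{Pco}E=\{\xi:T(\xi)\in\operatorname*{co}T(E)\}$ from $(\ref{pco})$. The key point is that, by the Cauchy--Binet formula for compound matrices, each block $\mathrm{adj}_s(R\xi S)$ is a linear function of $\mathrm{adj}_s\xi$ (via multiplication by the compound matrices of $R$ and $S$). Hence there is an invertible linear map $\Phi_{R,S}$ of $\Real^{\tau}$ with $T(R\xi S)=\Phi_{R,S}(T(\xi))$. Since $\Phi_{R,S}$ is linear it commutes with taking convex hulls, and since $L_{R,S}E=E$ we get $\Phi_{R,S}(T(E))=T(L_{R,S}E)=T(E)$, so that $\Phi_{R,S}(\operatorname*{co}T(E))=\operatorname*{co}T(E)$. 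Thus if $\xi\in\operatorname*{Pco}E$, i.e.\ $T(\xi)\in\operatorname*{co}T(E)$, then $T(R\xi S)=\Phi_{R,S}(T(\xi))\in\operatorname*{co}T(E)$, giving $R\xi S\in\operatorname*{Pco}E$ and hence isotropy of $\operatorname*{Pco}E$.

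For (iii) compactness is immediate: by $(\ref{phulls})$ we have $E^{rc}\subseteq E^{pc}=\operatorname*{Pco}E$, which is compact when $E$ is; as $E^{rc}$ is closed by Remark~\ref{remarkshulls}, it is a closed subset of a compact set, hence compact. For isotropy I would argue at the level of test functions. Fix $\xi\in E^{rc}$, $R,S\in\mathcal{O}(n)$ and an arbitrary rank one convex $f\in\mathcal{F}^{E}$, and set $g:=f\circ L_{R,S}$. By observation (a), $g$ is rank one convex, and by $L_{R,S}E=E$ we have $g|_{E}=f|_{L_{R,S}E}=f|_{E}\le 0$, so $g\in\mathcal{F}^{E}$. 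Therefore $g(\xi)\le 0$, i.e.\ $f(R\xi S)\le 0$; since $f$ was arbitrary this shows $R\xi S\in E^{rc}$.

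For (ii) one inclusion is trivial, as the union over compact \emph{isotropic} subsets of $U$ sits inside the union over all compact subsets in Definition~\ref{MShulls}. For the reverse inclusion, given any compact $E\subset U$ I would pass to its orbit $\widetilde E:=\{R\xi S:\xi\in E,\ R,S\in\mathcal{O}(n)\}$, which is isotropic by construction, compact as the continuous image of the compact set $\mathcal{O}(n)\times E\times\mathcal{O}(n)$, contains $E$, and satisfies $\widetilde E\subset U$ precisely because $U$ is isotropic. Finally, the hull operations are monotone: if $E\subset\widetilde E$ then $\mathcal{F}^{\widetilde E}\subseteq\mathcal{F}^{E}$, so $E^{pc}\subseteq\widetilde E^{pc}$ and $E^{rc}\subseteq\widetilde E^{rc}$. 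Thus every term $E^{pc}$ (resp.\ $E^{rc}$) in $U^{pc}$ (resp.\ $U^{rc}$) is contained in the corresponding term for the isotropic set $\widetilde E$, which yields the claimed equalities.
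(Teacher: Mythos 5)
Your proposal is correct and takes essentially the same approach as the paper: the orbit set $\bigcup_{R,S\in\mathcal{O}(n)}RES$ for (ii), transfer of rank one convex test functions under $\eta\mapsto R\eta S$ for (iii) (which you state directly where the paper argues by contraposition), and invariance of the $T$-representation of $\operatorname*{Pco}E$ for (i). You merely make explicit, via compound matrices and the identity $R(a\otimes b)S=(Ra)\otimes(S^{t}b)$, the steps the paper dismisses as easy to see.
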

\begin{proof}
(i) Let $\xi$ be a matrix belonging to $\operatorname*{Pco}E$ and let
$R,S$ be two orthogonal matrices. By (\ref{pco}), and using the fact that $E$ is isotropic,
it is easy to see that
$R\xi S \in \operatorname*{Pco}E$ so $\operatorname*{Pco}E$ is isotropic.

(ii) To prove the first equality it suffices to show that 
$$\bigcup_{\substack{E\subset U\\E\text{ compact}}} E^{pc}
\subseteq \bigcup_{\substack{E\subset U\\E\text{ compact}
\\E\text{ isotropic}}} E^{pc},$$
since the reverse inclusion is clear.

Let $E\subset U$ be a compact set. Then 
$\displaystyle \bigcup_{\substack{R, S \in \mathcal{O}(n)}}RES$ 
is an isotropic, bounded subset of $U$. It is also easy to see that this set is
closed. Therefore we conclude that
\begin{eqnarray*}
\bigcup_{\substack{E\subset U\\E\text{ compact}}} E^{pc}
&\subseteq& \bigcup_{\substack{E\subset U\\E\text{ compact}}} 
\left(\bigcup_{\substack{R, S \in \mathcal{O}(n)}}RES\right)^{pc} \\
&\subseteq& \bigcup_{\substack{V\subset U\\V\text{ compact}
\\V\text{ isotropic}}} V^{pc}.
\end{eqnarray*} 

A similar argument proves the second equality.

(iii) As $E$ is compact, $\operatorname*{Pco}E$ is also compact and, by definition, $E^{rc}$ is closed. 
Since by Remark~\ref{remarkshulls} and (\ref{phulls}) we have
$$E^{rc} \subseteq E^{pc} = \operatorname*{Pco}E,$$
we conclude that $E^{rc}$ is bounded and hence compact.

To prove that $E^{rc}$ is isotropic we will show that if
$\xi\notin E^{rc}$ then for every 
$R,S\in \mathcal{O}(n)$ one has $R\xi S\notin E^{rc}$.
Let $\xi \notin E^{rc}$, then there exists a rank one convex function 
$f:\Real^{n\times n}\to \Real$ such that
$f|_{E}\leq 0$ and $f(\xi)>0.$
Let $R,S \in \mathcal{O}(n)$ and define
$$f_1(\eta)=f(R^{-1}\eta S^{-1}).$$ 
Then $f_1$ is rank one convex
and for all $\eta \in E$,
$$f_1(\eta)=f(R^{-1}\eta S^{-1})\leq 0,$$
as $R^{-1}\eta S^{-1} \in E$. However $f_1(R\xi S)=f(\xi)>0$ and so
$R\xi S$ doesn't belong to $E^{rc}$.
\end{proof}

For the purposes of our existence theorem we will restrict our attention to
compact, isotropic sets $E$ of the form
\begin{equation}\label{notreE}
E=\left \{\xi \in \Real^{2\times 2}: (\lambda_1(\xi), \lambda_2(\xi))\in K \right\}
\end{equation}
for which 
\begin{equation}\label{Knumerofinitodipunti}
K \subset \left\{(x,y) \in \Real^2: 0<x \leq y\right\}.
\end{equation}

The proof of the next theorem relies on results of \cite{CT} and \cite{C}.

\begin{thm}\label{Erc=Epc}
\begin{itemize}
\item[(i)] If $E$ is a compact, isotropic set of the form 
(\ref{notreE}) and (\ref{Knumerofinitodipunti})
then $$E^{rc} = E^{pc} = \operatorname*{Pco}E.$$
\item[(ii)] If $U$ is an open, isotropic and bounded subset of $\Real^{2\times 2}$ 
then $$U^{rc} = U^{pc} = \operatorname*{Pco}U$$
and $$(U^{rc})^{rc} = U^{rc}.$$
\end{itemize}
\end{thm}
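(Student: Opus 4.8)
The plan is to leverage the general structure established in Theorem~\ref{Ercisotropic} together with the inclusions $E^{rc} \subseteq E^{pc}$ (Remark~\ref{remarkshulls}) and the identity $E^{pc} = \operatorname*{Pco}E$ from (\ref{phulls}). For part (i), the chain $E^{rc} \subseteq E^{pc} = \operatorname*{Pco}E$ is already free, so the entire content lies in establishing the reverse inclusion $\operatorname*{Pco}E \subseteq E^{rc}$. The strategy is to invoke the explicit characterization of $E^{rc}$ for isotropic sets (the one displayed in the introduction, involving the functions $f_\theta(x,y) = xy + \theta(y-x)$) which is attributed to results of Cardaliaguet and Tahraoui \cite{CT}. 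I would first show, using \cite{CT}, that $\operatorname*{Pco}E$ admits the \emph{same} characterization in terms of the $f_\theta$: that is, a matrix $\xi$ lies in $\operatorname*{Pco}E$ precisely when $f_\theta(\lambda_1(\xi),\lambda_2(\xi)) \le \max_{(a,b)\in K} f_\theta(a,b)$ for all admissible $\theta$. The key point is that the functions $\xi \mapsto f_\theta(\lambda_1(\xi),\lambda_2(\xi)) = \lambda_1\lambda_2 + \theta(\lambda_2 - \lambda_1) = |\det \xi| + \theta(\lambda_2(\xi) - \lambda_1(\xi))$ are polyconvex (since $|\det\xi|$ and $\lambda_2(\xi)-\lambda_1(\xi)$ are, the latter because $\lambda_2$ is a norm and $\lambda_1\lambda_2$ is polyconvex). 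Once both hulls are characterized by the identical system of inequalities, they must coincide, forcing $\operatorname*{Pco}E \subseteq E^{rc}$ and closing the loop.

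The main obstacle I anticipate is verifying that this family of polyconvex functions $\{f_\theta(\lambda_1,\lambda_2)\}_\theta$ is rich enough to \emph{cut out} $\operatorname*{Pco}E$ exactly, rather than merely overestimate it. Concretely, I would need a representation theorem from \cite{CT} guaranteeing that for compact isotropic sets of the form (\ref{notreE})--(\ref{Knumerofinitodipunti}), the polyconvex hull is the intersection of the sublevel sets $\{\xi : f_\theta(\lambda_1(\xi),\lambda_2(\xi)) \le \max_K f_\theta\}$ over $\theta \in [0, \max_{(a,b)\in K} b]$. The hypothesis $0 < x$ in (\ref{Knumerofinitodipunti}), excluding singular matrices, is presumably exactly what makes this representation valid; I would flag the precise \cite{CT} statement being used here as the crucial external input, and check that the restriction to the region $0 < \lambda_1 \le \lambda_2$ does not cause boundary pathologies in applying it.

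For part (ii), I would pass from the compact to the open case via Theorem~\ref{Ercisotropic}(ii), which expresses $U^{rc}$ and $U^{pc}$ as unions over compact isotropic subsets $E \subset U$. Applying part (i) to each such $E$ gives $E^{rc} = E^{pc} = \operatorname*{Pco}E$ termwise, and taking unions yields
\[
U^{rc} = \bigcup_{\substack{E\subset U\\E\text{ compact}\\E\text{ isotropic}}} E^{rc}
= \bigcup_{\substack{E\subset U\\E\text{ compact}\\E\text{ isotropic}}} E^{pc} = U^{pc}.
\]
The identification with $\operatorname*{Pco}U$ then follows from (\ref{openphulls}), which already records $U^{pc} = \operatorname*{Pco}U$ for open sets. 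One subtlety is that an arbitrary compact $E \subset U$ need not have the form (\ref{notreE})--(\ref{Knumerofinitodipunti}), so before applying (i) I would restrict the union to compact isotropic sets whose singular-value image $K$ avoids the axis $\{x=0\}$; since $U$ is open, bounded, and isotropic, one can exhaust $U$ by such sets, or intersect with regions bounded away from the singular matrices — this is where I would need to confirm that isotropic $U$ bounded away from nothing still admits the exhaustion, possibly requiring $U$ itself to contain no singular matrices, a point worth verifying against the needs of the existence theorem.

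Finally, for the idempotency claim $(U^{rc})^{rc} = U^{rc}$, the plan is to use that $U^{rc}$ is open (Remark~\ref{remarkshulls}) and, by the just-proved equality, equals $\operatorname*{Pco}U$, hence is polyconvex by Theorem~\ref{Ercisotropic}(i) applied to the open isotropic set (or directly, since polyconvex hulls are polyconvex). A polyconvex set is rank one convex by Theorem~\ref{propertiesconvex}(iv), and for an open rank one convex set $V$ one has $V^{rc} = V$: indeed any $\xi \in V^{rc}$ arises from a compact $E \subset V$ with $\xi \in E^{rc} \subseteq \operatorname*{Rco} E \subseteq V$ since $V$ is rank one convex and contains $E$. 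Applying this with $V = U^{rc}$ gives $(U^{rc})^{rc} = U^{rc}$. I expect this last part to be routine once the coincidence $U^{rc} = \operatorname*{Pco}U$ is in hand; the genuine work is entirely in part (i) and its reliance on the \cite{CT} characterization.
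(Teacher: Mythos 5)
Your part (i) is circular at the decisive step. The only nontrivial inclusion is $\operatorname*{Pco}E \subseteq E^{rc}$, and to get it you invoke an ``explicit characterization of $E^{rc}$'' via the $f_\theta$, attributed to \cite{CT}. But \cite{CT} (Proposition 6.7, together with Propositions 2.4 and 2.5 of \cite{C}) supplies that characterization only for the \emph{polyconvex} hull -- this is Theorem~\ref{caracEpc}; the corresponding statement for $E^{rc}$ appears in the paper only \emph{after} Theorem~\ref{Erc=Epc}, as Remark~\ref{Ksingularset}~(i), precisely as a corollary of it. So you are assuming what is to be proved. Your fallback -- that $\xi \mapsto f_{\theta}(\lambda_1(\xi),\lambda_2(\xi))$ is polyconvex because $|\det\xi|$ and $\lambda_2-\lambda_1$ are -- is false for every $\theta>0$: along the rank one segment $t \mapsto \operatorname*{diag}(t,c)$, $|t|\leq c$, one has $f_\theta(\lambda_1,\lambda_2)=\theta c+(c-\theta)|t|$, which has a concave kink at $t=0$ as soon as $c<\theta$; in particular $\lambda_2-\lambda_1 = 1-|t|$ along $t\mapsto\operatorname*{diag}(t,1)$ is not even rank one convex. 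And even if these functions were rank one convex, they would only give $E^{rc}\subseteq\operatorname*{Pco}E$, which is already known. The input from \cite{CT} that the paper actually uses is Theorem 5.1 there: every \emph{compact, isotropic, rank one convex} subset of $\Real^{2\times 2}$ is polyconvex. One applies it to $E^{rc}$ itself, which is compact and isotropic by Theorem~\ref{Ercisotropic}~(iii) and rank one convex by Remark~\ref{remarkshulls}; then $E^{rc}$ is a polyconvex set containing $E$, so $\operatorname*{Pco}E\subseteq E^{rc}$ by minimality -- no $f_\theta$ enters. Note that this argument needs no nonsingularity assumption, which also dissolves your worry in part (ii): part (i) holds for \emph{all} compact isotropic subsets of $\Real^{2\times 2}$, so the union in Theorem~\ref{Ercisotropic}~(ii) can be handled termwise; your proposed exhaustion by compacts avoiding $\{x=0\}$ would in fact fail whenever $U$ contains singular matrices, since such a union could not recover them.

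Your idempotency step also contains a genuine error: the claimed inclusion $E^{rc}\subseteq\operatorname*{Rco}E$ reverses the true one -- by (\ref{rhulls}), $\operatorname*{Rco}E\subsetneq\overline{\operatorname*{Rco}E}\subsetneq E^{rc}$ in general -- and the lemma ``$V$ open and rank one convex $\Rightarrow V^{rc}=V$'' is false: this is exactly the paper's warning that $U^{rc}\supsetneq\operatorname*{Rco}U$ for open sets (for instance, a union of four small disjoint balls around a $T_4$ configuration of diagonal matrices is open and rank one convex, yet its $rc$-hull in the functional sense is much larger). The step is easily repaired with material you already had in hand: since $U^{rc}=\operatorname*{Pco}U$ is open and polyconvex, Remark~\ref{remarkshulls} and (\ref{openphulls}) give
\begin{equation*}
(U^{rc})^{rc}\subseteq (U^{rc})^{pc}=\operatorname*{Pco}(U^{rc})=\operatorname*{Pco}\left(\operatorname*{Pco}U\right)=\operatorname*{Pco}U=U^{rc},
\end{equation*}
and the reverse inclusion is trivial. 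The paper argues in the same spirit but stays within the isotropic framework, reapplying the chain $\operatorname*{Pco}V=V^{pc}=V^{rc}$ to the open, bounded, isotropic set $U^{pc}$. As written, however, both the key inclusion in part (i) and your idempotency argument are unsupported.
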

\begin{proof}
(i) It was shown by \cite[Theorem 5.1]{CT} that any compact, isotropic, rank one
convex subset of $\Real^{2\times 2}$ is polyconvex. This, together with 
Theorem~\ref{Ercisotropic} (iii) and
Remark~\ref{remarkshulls}, shows that if $E$ is a compact, isotropic subset of 
$\Real^{2\times 2}$ then 
$$E^{rc} = E^{pc} = \operatorname*{Pco}E.$$ 

(ii) By equation (\ref{openphulls}), Theorem~\ref{Ercisotropic} (ii) and part (i) we have
\begin{equation}\label{eqn}
\operatorname*{Pco}U = U^{pc} = \bigcup_{\substack{E\subset U\\E\text{ compact}
\\E\text{ isotropic}}} E^{pc} = \bigcup_{\substack{E\subset U\\E\text{ compact}
\\E\text{ isotropic}}} E^{rc} = U^{rc}.
\end{equation}
Using this result we obtain
$$(U^{rc})^{rc} = (U^{pc})^{rc}.$$
By Remark~\ref{remarkshulls} $U^{pc}$ is open. Moreover, by Theorem~\ref{Ercisotropic} (i) and 
equation~(\ref{openphulls}) $U^{pc}$ is isotropic.
Thus we may use equation (\ref{eqn}) once again 
to conclude that
$$(U^{pc})^{rc} = (U^{pc})^{pc} = \operatorname*{Pco}(\operatorname*{Pco}U) =
\operatorname*{Pco}U = U^{rc}.$$
Thus 
$$(U^{rc})^{rc} = U^{rc}.$$
\end{proof}

Letting
\begin{equation}\label{ftheta}
f_{\theta}(x,y) := xy + \theta(y-x), \, 0 < x \leq y, \, \theta \geq 0
\end{equation}
the following result is a consequence of Proposition 6.7 in \cite{CT}, 
Propositions 2.4 and 2.5 in \cite{C} and (\ref{phulls}).

\begin{thm}\label{caracEpc}
Let $E$ be a compact, isotropic set of the form (\ref{notreE}) and 
(\ref{Knumerofinitodipunti}).
Then
\begin{eqnarray*}
& & E^{pc} = \operatorname*{Pco}E = \Big\{\xi \in \Real^{2\times 2} : 
f_{\theta}(\lambda_1(\xi),\lambda_2(\xi)) 
\\
& & \hspace{2cm}\leq  \left. \max\limits_{(a,b) \in K}f_{\theta}(a,b), 
\,\forall\,\,\theta \in [0,\max\limits_{(a,b)\in K}b]\right\},
\\
& & {\rm int}E^{pc} = {\rm int}\operatorname*{Pco}E = \Big\{\xi \in 
\Real^{2\times 2} : f_{\theta}(\lambda_1(\xi),\lambda_2(\xi)) 
\\
& & \hspace{2cm} <  \left. \max\limits_{(a,b) \in K}f_{\theta}(a,b), 
\,\forall\,\,\theta \in [0,\max\limits_{(a,b)\in K}b]\right\}.
\end{eqnarray*}
\end{thm}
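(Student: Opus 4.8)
The plan is to reduce everything to the polyconvex hull and then to a one–parameter family of scalar inequalities. Since $E$ is compact, \eqref{phulls} gives $E^{pc}=\operatorname*{Pco}E$, so it suffices to characterise $\operatorname*{Pco}E$. By \eqref{pco} one has $\operatorname*{Pco}E=\{\xi\in\Real^{2\times2}:T(\xi)\in\operatorname*{co}T(E)\}$ with $T(\xi)=(\xi,\det\xi)\in\Real^5$, and $\operatorname*{co}T(E)$ is a compact convex set because $E$ is compact and $T$ is continuous. Hence $\xi\in\operatorname*{Pco}E$ if and only if, for every $(A,s)\in\Real^{2\times2}\times\Real$,
\[
\langle A,\xi\rangle+s\det\xi\ \le\ h(A,s):=\max_{\eta\in E}\big(\langle A,\eta\rangle+s\det\eta\big).
\]
The whole point is to show that this infinite family of linear constraints collapses to the family indexed by $\theta$.

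Next I would exploit isotropy to make the constraints explicit. Because $\operatorname*{Pco}E$ is isotropic (Theorem~\ref{Ercisotropic}(i)) and $h$ depends on $A$ only through its singular values, I may assume $\xi={\rm diag}(\lambda_1(\xi),\lambda_2(\xi))$. For fixed $(a,b)\in K$ the matrices $\eta\in E$ with singular values $(a,b)$ split into the two $\mathcal{O}(2)\times\mathcal{O}(2)$–orbits $\det\eta=\pm ab$, and von Neumann's trace inequality computes $\max_\eta\langle A,\eta\rangle$ on each orbit from the singular values of $A$ alone. Carrying out this maximisation and then optimising over the remaining angular degrees of freedom of $A$, the surviving parameter is a single $\theta\ge0$, and each constraint reorganises into $f_{\theta}(\lambda_1(\xi),\lambda_2(\xi))\le\max_{(a,b)\in K}f_{\theta}(a,b)$. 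This is exactly the computation performed for isotropic sets in \cite[Proposition 6.7]{CT}, which together with \cite[Proposition 2.4]{C} yields
\[
\operatorname*{Pco}E=\Big\{\xi:\ f_{\theta}(\lambda_1(\xi),\lambda_2(\xi))\le M(\theta)\ \text{ for all }\theta\ge0\Big\},\quad M(\theta):=\max_{(a,b)\in K}f_{\theta}(a,b).
\]
I expect this equivalence to be the main obstacle; in particular the inclusion $\supseteq$, which amounts to actually producing the convex combination realising $T(\xi)\in\operatorname*{co}T(E)$ once all the $f_{\theta}$–inequalities hold, is the delicate step and is precisely what the cited result of Cardaliaguet--Tahraoui supplies.

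I would then cut the range of $\theta$ down to $[0,\beta^*]$ with $\beta^*:=\max_{(a,b)\in K}b$. Writing $M(\theta)=\max_{(a,b)\in K}[ab+\theta(b-a)]$ as a maximum of affine functions of $\theta$ shows that $M$ is convex, nondecreasing and piecewise linear, whereas $\theta\mapsto f_{\theta}(\lambda_1(\xi),\lambda_2(\xi))$ is affine with slope $\lambda_2(\xi)-\lambda_1(\xi)$. A monotonicity and convexity comparison of these two functions of $\theta$ shows that the constraints for $\theta>\beta^*$ are implied by those for $\theta\in[0,\beta^*]$; this is the reduction carried out in \cite[Proposition 2.5]{C}. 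Combined with \eqref{phulls}, this gives the first displayed formula of the theorem together with $E^{pc}=\operatorname*{Pco}E$.

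Finally, for the interior I would argue directly. Set $\Phi_{\theta}(\xi):=f_{\theta}(\lambda_1(\xi),\lambda_2(\xi))$, which is continuous in $\xi$ and jointly continuous in $(\theta,\xi)$. If $\Phi_{\theta}(\xi)<M(\theta)$ for all $\theta\in[0,\beta^*]$, then compactness of $[0,\beta^*]$ makes $\min_{\theta}\,(M(\theta)-\Phi_{\theta}(\xi))$ a strictly positive number, which by uniform continuity persists on a whole neighbourhood of $\xi$; hence that neighbourhood lies in $\operatorname*{Pco}E$ and $\xi\in{\rm int}\operatorname*{Pco}E$. Conversely, if some constraint is active, $\Phi_{\theta_0}(\xi_0)=M(\theta_0)$, I may take $\xi_0={\rm diag}(\lambda_1(\xi_0),\lambda_2(\xi_0))$ by isotropy and perturb along ${\rm diag}(0,1)$: this raises $\lambda_2$ while keeping it the larger singular value, so $\tau\mapsto\Phi_{\theta_0}(\xi_0+\tau\,{\rm diag}(0,1))$ has derivative $\lambda_1(\xi_0)+\theta_0$ at $\tau=0$. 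An active constraint forces $\lambda_1(\xi_0)+\theta_0>0$, since $\lambda_1(\xi_0)=\theta_0=0$ would give $\Phi_{\theta_0}(\xi_0)=0<M(0)=\max_{(a,b)\in K}ab$ (here $K\subset\{x>0\}$ is used). Thus arbitrarily close matrices violate the inequality, so $\xi_0\notin{\rm int}\operatorname*{Pco}E$. This establishes the interior formula and completes the argument.
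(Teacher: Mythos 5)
Your endpoint is the same as the paper's: the paper gives no proof of this theorem beyond observing that it follows from (\ref{phulls}) together with \cite[Proposition 6.7]{CT} and \cite[Propositions 2.4 and 2.5]{C}, and you cite exactly these sources; your direct argument for the interior formula (compactness of $[0,\max_{(a,b)\in K}b]$ for one inclusion, the perturbation ${\rm diag}(0,\tau)$ with derivative $\lambda_1(\xi_0)+\theta_0>0$ for the other) is correct once the first formula is granted, and is a nice explicit supplement to what the paper leaves to the references. However, the scaffolding you build between the citations contains a genuinely false step. You claim as an intermediate identity that $\operatorname*{Pco}E=\{\xi: f_\theta(\lambda_1(\xi),\lambda_2(\xi))\le M(\theta)\ \text{for all}\ \theta\ge 0\}$ with $M(\theta)=\max_{(a,b)\in K}f_\theta(a,b)$, and then that the constraints with $\theta>\beta^*:=\max_{(a,b)\in K}b$ are redundant. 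Both claims fail. Take $K=\{(\tfrac12,2)\}$, so that by the Dacorogna--Marcellini description recalled in Remark~\ref{Ksingularset}(iii) one has $\operatorname*{Pco}E=\{\xi: \lambda_1(\xi)\lambda_2(\xi)\le 1,\ \lambda_2(\xi)\le 2\}$, and let $\xi_0={\rm diag}(\tfrac1{10},2)$. Then $\xi_0\in\operatorname*{Pco}E$, and indeed $f_\theta(\tfrac1{10},2)-f_\theta(\tfrac12,2)=-\tfrac45+\tfrac25\theta\le 0$ for all $\theta\in[0,2]=[0,\beta^*]$; but at $\theta=3$ one computes $f_3(\tfrac1{10},2)=\tfrac{59}{10}>\tfrac{11}{2}=f_3(\tfrac12,2)$. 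So the all-$\theta$ set is \emph{strictly smaller} than $\operatorname*{Pco}E$: the restriction to $\theta\in[0,\beta^*]$ in the theorem is not a removal of redundant constraints but an essential enlargement of the set, and your convexity/monotonicity comparison cannot succeed --- in the example the slope $\lambda_2-\lambda_1=\tfrac{19}{10}$ of the affine function $\theta\mapsto f_\theta(\lambda_1,\lambda_2)$ strictly exceeds the slope $\tfrac32$ of $M(\theta)$, yet $\xi_0$ genuinely belongs to $\operatorname*{Pco}E$.

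The structural reason your route fails is also worth noting. The inequalities $f_\theta(\lambda_1(\xi),\lambda_2(\xi))\le M(\theta)$ are not linear in $T(\xi)=(\xi,\det\xi)$: for instance $\lambda_2(\xi)-\lambda_1(\xi)=\min\left\{\sqrt{\normasemplice{\xi}^2-2\det\xi},\sqrt{\normasemplice{\xi}^2+2\det\xi}\right\}$ is a minimum of two convex functions and is not affine in $T(\xi)$. Hence these constraints are not among the support-function constraints of $\operatorname*{co}T(E)$, and no von Neumann maximisation over orbits can make the two families coincide constraint by constraint; the passage from $T(\xi)\in\operatorname*{co}T(E)$ to the $f_\theta$-inequalities is precisely the nontrivial content of \cite[Proposition 6.7]{CT} and it produces the truncated range $[0,\max_{(a,b)\in K}b]$ directly (as it must, by the counterexample above), not via an all-$\theta$ family subsequently cut down. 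To repair your write-up, cite \cite[Proposition 6.7]{CT} and \cite[Propositions 2.4 and 2.5]{C} for the characterization \emph{with the truncated range} as the paper does, delete the two bridging claims, and keep your interior argument, which then stands as written.
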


\begin{rem}\label{Ksingularset}
{\rm (i) Taking into account Theorem~\ref{Erc=Epc} (i) it follows that
if $E$ is a compact, isotropic set of the form (\ref{notreE}) and 
(\ref{Knumerofinitodipunti}) then 
\begin{eqnarray*}
& & E^{rc} = \Big\{\xi \in \Real^{2\times 2} : 
f_{\theta}(\lambda_1(\xi),\lambda_2(\xi)) 
\\
& & \hspace{2cm}\leq  \left. \max\limits_{(a,b) \in K}f_{\theta}(a,b), 
\,\forall\,\,\theta \in [0,\max\limits_{(a,b)\in K}b]\right\},
\\
& & {\rm int}E^{rc} = \Big\{\xi \in 
\Real^{2\times 2} : f_{\theta}(\lambda_1(\xi),\lambda_2(\xi)) 
\\
& & \hspace{2cm} <  \left. \max\limits_{(a,b) \in K}f_{\theta}(a,b), 
\,\forall\,\,\theta \in [0,\max\limits_{(a,b)\in K}b]\right\}.
\end{eqnarray*}

(ii) It was shown by Croce in \cite[Theorem 3.1]{C} that the set representing 
$E^{pc}$ also coincides with $\operatorname*{Rco}\,E$. Therefore, in this case,
$E^{rc} = \operatorname*{Rco}\,E$ but our existence result is independent of this fact.
We also point out that this characterization of $\operatorname*{Rco}\,E$ does not follow from
Theorem 5.1 in \cite{CT} since it is not known, a priori, that $\operatorname*{Rco}\,E$ is compact.

(iii) In the particular case where $K$ is composed of a unique point $(a,b)$ we obtain
$$
E^{pc} = E^{rc} = \left\{\xi \in \Real^{2\times 2} : \lambda_1(\xi)\cdot\lambda_2(\xi) 
\leq ab, \lambda_2(\xi) \leq b \right\}.
$$
This result was first obtained by Dacorogna and Marcellini in \cite{DM} who also showed
that this set coincides with $\operatorname*{Rco}\,E$ in
the more general framework of $\Real^{n\times n}$ matrices.}
\end{rem}

\section{In-Approximation} 
To show  Theorem~\ref{teorema d'esistenza nel nostro caso sverakmuller} we 
will use an existence  result due to M\"uller and \v{S}ver\'ak \cite{MS} which
requires the following in-approximation property.

\begin{defn}{\it{(In-approximation)}}\label{inapprox}
Let $E$ be a compact subset of $\Real^{N\times n}$. We say that a sequence of  
open sets $U_i \subseteq \Real^{N\times n}$ 
is an in-approximation of $E$ if 
\begin{enumerate}
\item $U_i\subseteq U_{i+1}^{rc}$; 
\item $\sup \limits_{\xi \in U_i}\textnormal{dist}(\xi,E) \to 0 
\; \; {\rm as} \; \; i \to + \infty$.
\end{enumerate}
\end{defn}

In this section we will show that the set $E$, defined by (\ref{notreE}) and
(\ref{Knumerofinitodipunti}), admits an in-approximation. 

\begin{defn}\label{defcasosoloab} Let
$0 < \varepsilon_n < \frac{a_0}{2}$, where $a_0 = \min\limits_{(a,b) \in K} a$,
be a decreasing sequence 
such that $\varepsilon_n \to 0^+$. 
For $(a,b) \in K$ we define the open sets,
\[
\begin{array}{l}
R^n_{(a,b)}=
\{(x,y)\in \Real^2: a-2\varepsilon_{n}< x < a-\varepsilon_{n},
\\
 \phantom {R^n_{(a,b)}= \{ (x,y)\in T:} \,\,\displaystyle
b-\varepsilon_n < y < b -\frac{\varepsilon_{n}}{2}\}
\end{array}
\]
and    
$\displaystyle 
U_n:=\left\{\xi \in \Real^{2\times 2}: (\lambda_1(\xi),\lambda_2(\xi))\in
O_n\right\}$, where 
$$\displaystyle O_n=\bigcup\limits_{(a,b)\in K}R^n_{(a,b)}.$$
\end{defn}

\begin{rem}{\rm Notice that, given the choice of $\varepsilon_n$, the sets 
$R^n_{(a,b)}$ lie in the 
region $\{(x,y) \in \Real^2 : 0 < x \leq y\}$. Also, the set $U_n$ is open since the functions
$\lambda_1$ and $\lambda_2$ are continuous.}
\end{rem}

\begin{thm}\label{inapproximationteoexistence}
Let $E$ be given by (\ref{notreE}) and (\ref{Knumerofinitodipunti}). Then $E$
admits an in-approximation.
\end{thm}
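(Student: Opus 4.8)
The plan is to verify the two defining properties of an in-approximation (Definition~\ref{inapprox}) separately, treating the distance estimate (2) as the routine part and the inclusion (1) as the substantial one.

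For property (2), I would fix $\xi\in U_i$, so that $(\lambda_1(\xi),\lambda_2(\xi))\in R^i_{(a,b)}$ for some $(a,b)\in K$, and use the singular value decomposition $\xi=R\,\operatorname{diag}(\lambda_1(\xi),\lambda_2(\xi))\,S$ with $R,S\in\mathcal{O}(2)$. Setting $\eta:=R\,\operatorname{diag}(a,b)\,S$, we have $\eta\in E$, and since the norm is invariant under orthogonal transformations, $\normasemplice{\xi-\eta}=\normasemplice{\operatorname{diag}(\lambda_1(\xi)-a,\lambda_2(\xi)-b)}$. The definition of $R^i_{(a,b)}$ gives $|\lambda_1(\xi)-a|<2\varepsilon_i$ and $|\lambda_2(\xi)-b|<\varepsilon_i$, whence $\operatorname{dist}(\xi,E)\le\normasemplice{\xi-\eta}\le\sqrt5\,\varepsilon_i$, uniformly in $\xi$. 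As $\varepsilon_i\to0^+$ this yields (2).

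The core of the argument is property (1), $U_i\subseteq U_{i+1}^{rc}$. The key observation is that one need not work with all of $U_{i+1}$: by Definition~\ref{MShulls} it suffices, for a given $\xi\in U_i$, to exhibit a single compact subset of $U_{i+1}$ whose rank one convex hull contains $\xi$. So I would fix $\xi\in U_i$ with $(x_0,y_0):=(\lambda_1(\xi),\lambda_2(\xi))\in R^i_{(a,b)}$. Since $(a,b)\in K$, the rectangle $R^{i+1}_{(a,b)}$ lies in $O_{i+1}$, and because $\varepsilon_{i+1}<\varepsilon_i$ one has $x_0<a-\varepsilon_i<a-\varepsilon_{i+1}$ and $y_0<b-\varepsilon_i/2<b-\varepsilon_{i+1}/2$; that is, the top-right corner of $R^{i+1}_{(a,b)}$ strictly dominates $(x_0,y_0)$ in both coordinates. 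I would therefore pick a closed rectangle $K'\subset R^{i+1}_{(a,b)}$ with top-right corner $(\alpha,\beta)$ still satisfying $x_0<\alpha$ and $y_0<\beta$, and set $E':=\{\eta\in\Real^{2\times2}:(\lambda_1(\eta),\lambda_2(\eta))\in K'\}$, a compact isotropic subset of $U_{i+1}$ lying in the region $\{0<x\le y\}$, so that Theorem~\ref{Erc=Epc}(i) and Theorem~\ref{caracEpc} apply to it.

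It then remains to check that $\xi\in (E')^{rc}$, i.e. that $f_\theta(x_0,y_0)\le\max_{K'}f_\theta$ for every $\theta\in[0,\beta]$. Here the restricted range of $\theta$ — only up to $\beta$, the largest second coordinate occurring in $K'$, rather than up to $\max_{(a,b)\in K}b$ — is exactly what makes the estimate work, and is the reason for localizing to the single small rectangle $K'$. For $\theta\le\beta$ one has $\partial_y f_\theta=x+\theta>0$, and on the top edge $\{y=\beta\}$ one has $\partial_x f_\theta=\beta-\theta\ge0$, so $\max_{K'}f_\theta=f_\theta(\alpha,\beta)$; moreover the difference $f_\theta(\alpha,\beta)-f_\theta(x_0,y_0)$ is affine in $\theta$, equal to $\alpha\beta-x_0y_0>0$ at $\theta=0$ and to $(\beta+x_0)(\beta-y_0)>0$ at $\theta=\beta$, hence positive throughout $[0,\beta]$. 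Thus $\xi\in (E')^{rc}\subseteq U_{i+1}^{rc}$, which gives (1). The main obstacle — and the place where the specific dimensions $2\varepsilon_n,\varepsilon_n,\varepsilon_n/2$ of the rectangles and the monotonicity of $\varepsilon_n$ enter — is precisely this uniform-in-$\theta$ comparison; the insight that one should localize to a single rectangle $K'$, so that the relevant $\theta$-interval shrinks to $[0,\beta]$, is what reduces a delicate global inequality to the two easy endpoint checks above.
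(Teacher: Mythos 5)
Your proof is correct and takes essentially the same route as the paper: both reduce to a single rectangle $R^{n+1}_{(a,b)}$, exhibit a compact subset of $U_{n+1}$ whose singular values strictly dominate $(\lambda_1(\xi),\lambda_2(\xi))$ and whose rank one convex hull contains $\xi$ (using $U^{rc}=\bigcup E^{rc}$ over compact $E\subset U$), and conclude with the same orthogonal-invariance distance estimate for property (2). The only cosmetic difference is that the paper takes the compact set to correspond to a single point $(\alpha,\beta)$ and quotes the hull formula of Remark~\ref{Ksingularset}~(iii), whereas you take a closed rectangle and re-derive membership from the $f_\theta$-characterization of Theorem~\ref{caracEpc}; your endpoint checks at $\theta=0$ and $\theta=\beta$ are exactly the two conditions $\lambda_1\lambda_2\le\alpha\beta$ and $\lambda_2\le\beta$ of that formula.
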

\begin{proof}
We will show that the sequence $U_n$ of the above definition is an in-approximation 
for $E$.

\noindent {\it{Step 1.}} We begin by showing the first 
condition of the definition of in-approximation, that is,
$U_n \subseteq U_{n+1}^{rc}.$
To this end it suffices to show that, for every $n$,
\begin{equation}\label{inclusionefine}
V^{n}_{(a,b)} \subseteq  (V^{n+1}_{(a,b)})^{rc},
\end{equation}
where 
$$V^{n}_{(a,b)} = \left\{\xi \in \Real^{2\times 2}: 
(\lambda_1(\xi),\lambda_2(\xi))\in R^n_{(a,b)}\right\}.$$

Let $\xi \in V^{n}_{(a,b)}.$ 
We will show there exists a compact set $C\subset V^{n+1}_{(a,b)}$ 
such that $\xi \in C^{rc}$.
Since the sequence $\varepsilon_n$ is decreasing and $\xi \in V^{n}_{(a,b)}$
we may choose $0 < \delta < \varepsilon_{n+1}$ such that 
$\lambda_1(\xi) \leq a - \varepsilon_{n+1} - \delta$
and $\lambda_2(\xi) \leq b - \frac{\varepsilon_{n+1}}{2} - \frac{\delta}{2}$.
Letting
\begin{eqnarray*}
C=\left\{\eta \in \Real^{2\times 2}: 
\lambda_1(\eta)=a-\varepsilon_{n+1}-\delta, \right.
\\
\left. \hspace{1cm}\lambda_2(\eta)=b-\frac{\varepsilon_{n+1}}{2}-\frac{\delta}{2}
\right\}
\end{eqnarray*}
by the choice of $\delta$ and Remark~\ref{Ksingularset} (iii)
it follows that $C$ is a non-empty compact subset of $V^{n+1}_{(a,b)}$
and $\xi \in C^{rc}$.

\noindent {\it{Step 2.}} We now proceed with the proof of the second property 
of the in-approximation.
Let $\xi \in U_n$. Then there exists $(a,b) \in K$ such that 
$(\lambda_1(\xi), \lambda_2(\xi)) \in R^n_{(a,b)}$. Let 
$\eta = \left(\begin{array}{cc}
a & 0\\
0 & b 
\end{array}\right)$
and let $R$, $S$ be orthogonal matrices such that
$R\xi S = \left(\begin{array}{cc}
\lambda_1(\xi) & 0\\
0 & \lambda_2(\xi) 
\end{array}\right)$. Then $R^{-1}\eta S^{-1} \in E$
so 
\begin{eqnarray*}
{\rm dist}(\xi, E) &\leq& \|\xi - R^{-1}\eta S^{-1}\| =
\|R\xi S - \eta\| \\
&\leq& \sqrt 2 (|\lambda_1(\xi) - a| + |\lambda_2(\xi) - b|) \leq 3\sqrt 2 \varepsilon_n.
\end{eqnarray*}
Thus we conclude that
$$\sup\limits_{\xi \in U_n}{\rm dist}(\xi, E) \to 0 \; \; {\rm as}
\; \; n \to +\infty.$$
\end{proof}

In fact, given a compact set $E$ satisfying the hypothesis of the previous
theorem, it is possible to obtain an in-approximation sequence for $E$ such that
if $\xi \in {\rm int} E^{rc}$ then $\xi$ belongs to the first set of this sequence.
In order to prove this stronger result, which will be used in the next section, we will need 
the following definitions.

\begin{defn}\label{newsets} 
Let $\varepsilon_n$ be the sequence in Definition~\ref{defcasosoloab}
and let $\delta_n$ be such that $0 < \delta_n < \frac{\varepsilon_n}{4}$. 
For $(a,b) \in K$ we define the compact sets,
\[
\begin{array}{l}
\tilde{R}^n_{(a,b)}=
\{(x,y)\in \Real^2: a-2\varepsilon_{n}+\delta_n 
\leq x \leq a-\varepsilon_{n} - \delta_n,
\\
\phantom {R^n_{(a,b)}= \{ (x,y)\in T:} \,\,\displaystyle
b-\varepsilon_n + \delta_n \leq y \leq b -\frac{\varepsilon_{n}}{2} - \delta_n\}
\end{array}
\]
and    
$\displaystyle 
C_n:=\left\{\xi \in \Real^{2\times 2}: (\lambda_1(\xi),\lambda_2(\xi))\in
K_n\right\}$, where 
$$\displaystyle K_n=\bigcup\limits_{(a,b)\in K}\tilde{R}^n_{(a,b)}.$$
\end{defn}

\begin{rem}{\rm Notice that, given the choice of $\varepsilon_n$ and $\delta_n$, 
the sets $\tilde{R}^n_{(a,b)}$ are non-empty and lie in the 
region $\{(x,y) \in \Real^2 : 0 < x \leq y\}$.}
\end{rem}

\begin{prop}\label{propftheta} 
The function $f_\theta(x,y)$ defined in (\ref{ftheta}) satisfies the following properties:
\begin{itemize}

\item[i)] $f_\theta$ is strictly increasing in $y$, for every $x > 0$ and 
$\theta \geq 0$;

\item[ii)] $f_\theta$ is strictly increasing in $x$, for every $y > \theta$ and 
is strictly decreasing in $x$, for every $y < \theta$;

\item[iii)] $f_\theta(\cdot,\theta)$ is constant, for every $\theta \geq 0$;

\item[iv)] setting 
$$
\begin{array}{lll}
\alpha^n_{(a,b)}(\theta) & = & f_\theta\left(a-2\varepsilon_n
+\delta_n,b-\frac{\varepsilon_n}{2}-\delta_n\right)
\\
\beta^n_{(a,b)}(\theta) & = &
f_\theta\left(a-\varepsilon_n-\delta_n,b-\frac{\varepsilon_n}{2}-\delta_n\right)
\end{array}
$$
with $\varepsilon_n$ and $\delta_n$ as in Definitions \ref{defcasosoloab} and \ref{newsets}, 
one has
$$
\max\limits_{(x,y)\in \tilde{R}^{n}_{(a,b)}}f_\theta(x,y)
= \max\left\{\alpha^n_{(a,b)}(\theta),\beta^n_{(a,b)}(\theta)\right\}
$$
$$
=\left\{
\begin{array}{l}
\beta^n_{(a,b)}(\theta), \, {\rm if} \, \theta \in [0,\max\limits_{(x,y)\in
\tilde{R}^n_{(a,b)}}y]
\\
\alpha^n_{(a,b)}(\theta), \, {\rm if} \, \theta \geq \max\limits_{(x,y)\in
\tilde{R}^n_{(a,b)}}y\,.
\end{array}\right.$$ 
\end{itemize}
\end{prop}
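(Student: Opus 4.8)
The plan is to verify each of the four properties directly from the definition $f_\theta(x,y) = xy + \theta(y-x)$, treating them in order, since the first three are elementary calculus facts about a bilinear-plus-linear function, and only the fourth requires genuine work.

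For \emph{i)}, I would write $\frac{\partial f_\theta}{\partial y} = x + \theta$; since $x > 0$ and $\theta \geq 0$, this is strictly positive, giving strict monotonicity in $y$. For \emph{ii)}, the computation $\frac{\partial f_\theta}{\partial x} = y - \theta$ immediately yields that $f_\theta$ is strictly increasing in $x$ when $y > \theta$ and strictly decreasing when $y < \theta$, as claimed. For \emph{iii)}, substituting $y = \theta$ gives $f_\theta(x,\theta) = x\theta + \theta(\theta - x) = \theta^2$, which is independent of $x$, so $f_\theta(\cdot,\theta)$ is constant. These three are routine and I would state them compactly.

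The substance is in \emph{iv)}, where the goal is to locate the maximum of $f_\theta$ over the rectangle $\tilde{R}^n_{(a,b)} = [a-2\varepsilon_n+\delta_n,\, a-\varepsilon_n-\delta_n] \times [b-\varepsilon_n+\delta_n,\, b-\frac{\varepsilon_n}{2}-\delta_n]$. The key observation, coming from \emph{i)}, is that since $f_\theta$ is strictly increasing in $y$ for every fixed admissible $x$, the maximum over the rectangle must be attained on its top edge $y = b - \frac{\varepsilon_n}{2} - \delta_n$. Restricted to that edge, $f_\theta$ becomes an affine function of $x$ (by \emph{ii)}, monotone in $x$ with slope $y - \theta$), so its maximum over the $x$-interval is attained at one of the two endpoints $x = a - 2\varepsilon_n + \delta_n$ or $x = a - \varepsilon_n - \delta_n$. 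Evaluating $f_\theta$ at the two corners on the top edge gives exactly $\alpha^n_{(a,b)}(\theta)$ and $\beta^n_{(a,b)}(\theta)$, which establishes the first equality $\max f_\theta = \max\{\alpha^n_{(a,b)}, \beta^n_{(a,b)}\}$.

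It then remains to decide which of the two corner values dominates as a function of $\theta$, and this is the main (though still modest) obstacle. By \emph{ii)}, on the top edge $f_\theta$ is increasing in $x$ precisely when $y > \theta$, i.e.\ when $\theta < b - \frac{\varepsilon_n}{2} - \delta_n = \max_{(x,y)\in\tilde{R}^n_{(a,b)}} y$, and decreasing when $\theta$ exceeds this value. Since $\beta^n_{(a,b)}(\theta)$ corresponds to the larger value of $x$ and $\alpha^n_{(a,b)}(\theta)$ to the smaller, the maximum is $\beta^n_{(a,b)}(\theta)$ on the interval $[0, \max y]$ and $\alpha^n_{(a,b)}(\theta)$ for $\theta \geq \max y$, with equality at the crossover point $\theta = \max y$ (consistent with \emph{iii)}, where $f_\theta$ is constant in $x$). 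I expect the only care needed is to confirm the direction of the inequality between the two corners matches the sign of $y - \theta$ and to check the boundary case $\theta = \max_{(x,y)\in\tilde{R}^n_{(a,b)}} y$, where both expressions agree; these follow directly from the signs computed above.
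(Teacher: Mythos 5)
Your proof is correct and follows exactly the route the paper intends: the paper dismisses \emph{i)}--\emph{iii)} as clear and states that \emph{iv)} follows from them, which is precisely your argument (monotonicity in $y$ from \emph{i)} forces the maximum onto the top edge of $\tilde{R}^n_{(a,b)}$, the sign of the slope $y-\theta$ from \emph{ii)} selects the corner, and \emph{iii)} covers the crossover at $\theta=\max y$). You have simply supplied the details the paper leaves implicit, including the correct verification that $x>0$ on the rectangle so that \emph{i)} applies.
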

\begin{proof}
The first three properties are clear and the fourth one follows from $i)$, 
$ii)$ and $iii)$.
\end{proof}

We are now in position to prove the stronger in-approximation property which is
required to obtain our existence result.

\begin{thm}\label{datoalbordo} Let $E$ be the set defined by
(\ref{notreE}) and (\ref{Knumerofinitodipunti}) and let 
$\xi \in \textnormal{int}E^{rc}$. Then there exists an in-approximation sequence 
$U_n$ for $E$ such that $\xi \in U_1$.
\end{thm}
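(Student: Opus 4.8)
The plan is to recycle the in-approximation of Theorem~\ref{inapproximationteoexistence} for the tail of the sequence, and to prepend a single large open set that already contains the prescribed matrix $\xi$; the natural candidate is the interior of the rank one convex hull of one of the compact sets $C_n$ of Definition~\ref{newsets}, for a suitably large index.

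First I would record what $\xi\in\textnormal{int}E^{rc}$ means quantitatively. Writing $\lambda_i=\lambda_i(\xi)$ and $M=\max_{(a,b)\in K}b$, Remark~\ref{Ksingularset}(i) gives
\[
h(\theta):=\max_{(a,b)\in K}f_\theta(a,b)-f_\theta(\lambda_1,\lambda_2)>0,\qquad\theta\in[0,M].
\]
Since $h$ is continuous on the compact interval $[0,M]$, it attains a positive minimum $c>0$ there. This passage from a pointwise-in-$\theta$ strict inequality to a uniform gap is what makes the perturbation argument below work.

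The key step, and the one I expect to be the main obstacle, is to show that $\xi\in\textnormal{int}C_{n_0}^{rc}$ for all large $n_0$. As $C_n$ is a compact, isotropic set of the form (\ref{notreE})--(\ref{Knumerofinitodipunti}) with $K_n$ replacing $K$, Remark~\ref{Ksingularset}(i) describes $\textnormal{int}C_n^{rc}$ by the strict inequalities $f_\theta(\lambda_1,\lambda_2)<\max_{(x,y)\in K_n}f_\theta(x,y)$ for $\theta\in[0,\max_{(x,y)\in K_n}y]$. Using $K_n=\bigcup_{(a,b)\in K}\tilde R^n_{(a,b)}$ together with Proposition~\ref{propftheta}(iv),
\[
\max_{(x,y)\in K_n}f_\theta(x,y)=\max_{(a,b)\in K}\max\bigl\{\alpha^n_{(a,b)}(\theta),\beta^n_{(a,b)}(\theta)\bigr\}.
\]
Because $\varepsilon_n,\delta_n\to0$, the rectangles $\tilde R^n_{(a,b)}$ shrink to the point $(a,b)$ uniformly in $(a,b)\in K$, so $\alpha^n_{(a,b)}(\theta)$ and $\beta^n_{(a,b)}(\theta)$ both tend to $f_\theta(a,b)$ uniformly in $(a,b)\in K$ and $\theta\in[0,M]$. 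Hence $\max_{K_n}f_\theta\to\max_K f_\theta$ uniformly on $[0,M]$. Since $\max_{(x,y)\in K_n}y=M-\tfrac{\varepsilon_n}{2}-\delta_n<M$, the admissible interval satisfies $[0,\max_{K_n}y]\subseteq[0,M]$, and for $n_0$ large enough the uniform gap gives, for every $\theta\in[0,\max_{K_{n_0}}y]$,
\[
\max_{(x,y)\in K_{n_0}}f_\theta(x,y)-f_\theta(\lambda_1,\lambda_2)\ge h(\theta)-\tfrac{c}{2}\ge\tfrac{c}{2}>0,
\]
which is exactly the condition $\xi\in\textnormal{int}C_{n_0}^{rc}$.

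It remains to assemble the sequence. I would set $W_1:=\textnormal{int}C_{n_0}^{rc}$ and $W_k:=U_{n_0+k-2}$ for $k\ge2$, where the $U_n$ are the open sets of Definition~\ref{defcasosoloab}. Then $W_1$ is open and, by the previous step, contains $\xi$. Comparing the defining rectangles shows $\tilde R^n_{(a,b)}\subseteq R^n_{(a,b)}$, hence $K_{n_0}\subseteq O_{n_0}$ and $C_{n_0}\subseteq U_{n_0}$; as $C_{n_0}$ is compact and $U_{n_0}$ open, the very definition of the open rank one hull yields $W_1\subseteq C_{n_0}^{rc}\subseteq U_{n_0}^{rc}=W_2^{rc}$. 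For $k\ge2$ the inclusion $W_k\subseteq W_{k+1}^{rc}$ is precisely property (1) established in Theorem~\ref{inapproximationteoexistence}, and since the tail $\{W_k\}_{k\ge2}$ coincides with $\{U_n\}_{n\ge n_0}$, property (2) of Definition~\ref{inapprox} follows from the same theorem. Thus $\{W_k\}$ is an in-approximation of $E$ with $\xi\in W_1$, as required.
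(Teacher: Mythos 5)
Your argument is correct and its analytic core is the same as the paper's: you use the same compact sets $C_n$ of Definition~\ref{newsets}, the same upgrade of the pointwise strict inequalities defining $\textnormal{int}\,E^{rc}$ to a uniform gap on the compact interval $[0,M]$ (your $c$ is the paper's $\tau$), and the same uniform convergence $\max_{K_n}f_\theta\to\max_{K}f_\theta$ obtained from Proposition~\ref{propftheta}(iv) and the affine-in-$\theta$ form of $\alpha^n_{(a,b)}$ and $\beta^n_{(a,b)}$. The one place you genuinely deviate is the assembly of the sequence. The paper takes $(U_N)^{rc}, U_{N+1}, U_{N+2},\dots$ as its in-approximation, and to verify the first inclusion $(U_N)^{rc}\subseteq (U_{N+1})^{rc}$ it must invoke the idempotence property $(U^{rc})^{rc}=U^{rc}$ of Theorem~\ref{Erc=Epc}(ii), which rests on the Cardaliaguet--Tahraoui isotropic machinery. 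Your choice $W_1=\textnormal{int}\,C_{n_0}^{rc}$ sidesteps that entirely: $W_1\subseteq C_{n_0}^{rc}\subseteq U_{n_0}^{rc}=W_2^{rc}$ follows from the bare definition of the open rank one convex hull as a union over compact subsets. This is a small but real simplification of the concluding step; what the paper's version buys in exchange is that it never needs the interior characterization of $C_{n_0}^{rc}$, only membership $\xi\in(C_N)^{rc}$, whereas you pass through $\xi\in\textnormal{int}\,C_{n_0}^{rc}$ (both are delivered by the same strict inequalities, so nothing is lost).

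One loose end you should close: you twice rely on the compactness of $C_{n_0}$ --- once to apply Remark~\ref{Ksingularset}(i) (equivalently Theorem~\ref{caracEpc}) to $C_{n_0}$, whose hypotheses require a compact set of the form (\ref{notreE})--(\ref{Knumerofinitodipunti}), and once for the inclusion $C_{n_0}^{rc}\subseteq U_{n_0}^{rc}$ --- but you only assert it. Boundedness is clear (as the paper notes, $\lambda_2$ is a norm and $K$ is compact), but closedness of $K_n=\bigcup_{(a,b)\in K}\tilde{R}^n_{(a,b)}$ is not automatic, since the union runs over the possibly infinite set $K$. The paper proves it by a sequence argument: if $\xi_m\to\xi$ with $(\lambda_1(\xi_m),\lambda_2(\xi_m))\in\tilde{R}^n_{(a_m,b_m)}$, extract $(a_m,b_m)\to(a,b)\in K$ by compactness of $K$ and pass to the limit in the non-strict inequalities defining $\tilde{R}^n_{(a_m,b_m)}$, using the continuity of $\lambda_1,\lambda_2$. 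This is routine and repairs the only omission; with it included, your proof is complete.
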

\begin{proof}
Consider the sequence of sets $U_n$ given in Definition~\ref{defcasosoloab}
and let $\xi \in \textnormal{int}\,E^{rc}$.
We will show that there exists $N=N(\xi) \in \N$ such that 
\begin{equation}\label{limitelemme}
\xi \in (U_N)^{rc}.
\end{equation}
Given that $U_N$ is open, it suffices to show that $\xi$ belongs to $C^{rc}$ 
for a certain compact subset $C$ of $U_N$.

Let $C_n$ be the sets defined in Definition~\ref{newsets}. Clearly 
$C_n \subseteq U_n$
and $C_n$ are bounded since $\lambda_2$ is a norm and $K$ is compact.
To prove that each $C_n$ is closed we consider a sequence $\xi_m \in C_n$ such that
$\xi_m \to \xi$ as $m \to + \infty$. Then, for every $m$ there exists 
$(a_m,b_m) \in K$
such that $(\lambda_1(\xi_m),\lambda_2(\xi_m)) \in \tilde{R}^n_{(a_m,b_m)}$.
As $K$ is compact, there exists $(a,b) \in K$ such that, up to a subsequence,
$(a_m,b_m) \to (a,b)$ as $m \to +\infty$. By the inequalities that define
$\tilde{R}^n_{(a_m,b_m)}$ and the continuity of $\lambda_1$ and $\lambda_2$ we 
conclude that
$(\lambda_1(\xi),\lambda_2(\xi)) \in \tilde{R}^n_{(a,b)}$ and thus $\xi \in C_n$.
Thus $C_n$ is compact.

In order to choose an appropriate $N$ to satisfy (\ref{limitelemme}) we begin by showing that
\begin{equation}\label{limite}
\max\limits_{(a,b)\in K_n}f_\theta (a,b) \to \max\limits_{(a,b)\in K}f_\theta (a,b), 
\,\,\,\,\,\,\,n\to +\infty,
\end{equation}
uniformly with respect to $\theta \in [0,\max\limits_{(a,b)\in K}b]$. 
By Proposition~\ref{propftheta}, $iv)$
$$
\max\limits_{(a,b) \in K_n}f_\theta(a,b) =
\sup\limits_{(a,b)\in K} \max\{\alpha^n_{(a,b)}(\theta),
\beta^n_{(a,b)}(\theta)\}
$$
and
$$
|\sup\limits_{(a,b)\in K}
\max\{\alpha^n_{(a,b)}(\theta),\beta^n_{(a,b)}(\theta)\}-
\max\limits_{(a,b) \in K}f_\theta(a,b)|
$$
$$
\leq \sup\limits_{(a,b)\in K}
|\max\{\alpha^n_{(a,b)}(\theta),
\beta^n_{(a,b)}(\theta)\}-f_\theta(a,b)|.
$$
Therefore we must show that, as $n \to +\infty$,  
$$
|\alpha^n_{(a,b)}(\theta)-f_\theta(a,b)| \to 0
\,,\,\,
|\beta^n_{(a,b)}(\theta)-f_\theta(a,b)| \to 0,
$$
uniformly with respect to $\theta$ and to $(a,b)$. We start with the first 
limit. Letting
$$
m_n=b-\frac{\varepsilon_n}{2}-2\delta_n-a+2\varepsilon_n,
$$
and
$$
q_n=\left(a-2\varepsilon_n
+\delta_n\right)\left(b-\frac{\varepsilon_n}{2}-\delta_n\right)
$$
we have $\alpha^n_{(a,b)}(\theta)=m_n\theta+q_n$. Notice that 
$q_n-ab\to 0$ and $m_n-b+a\to 0$ uniformly with respect to $(a,b)$. 
This implies the result.
The same reasoning applies to the second limit.

As $\xi \in \textnormal{int}\,E^{rc}$ we know that
$$f_\theta(\lambda_1(\xi),\lambda_2(\xi)) < \max\limits_{(a,b)\in K}f_\theta (a,b),
\,\,\forall\,\, \theta\in [0,\max\limits_{(a,b)\in K}b]
$$
thus, there exists $\tau > 0$ such that
$$f_\theta(\lambda_1(\xi),\lambda_2(\xi)) 
< \max\limits_{(a,b)\in K}f_\theta (a,b) - \tau,
\,\,\forall\,\, \theta\in [0,\max\limits_{(a,b)\in K}b].
$$
By the uniform convergence shown in (\ref{limite}), for this $\tau$ there exists 
$N \in \N$
such that
$$\max\limits_{(a,b)\in K}f_\theta (a,b) -\tau 
< \max\limits_{(a,b)\in K_N}f_\theta (a,b),
\,\,\forall\,\, \theta\in [0,\max\limits_{(a,b)\in K}b]
$$
and thus 
$$f_\theta(\lambda_1(\xi),\lambda_2(\xi)) 
< \max\limits_{(a,b)\in K_N}f_\theta (a,b),
\,\,\forall\,\, \theta\in [0,\max\limits_{(a,b)\in K}b].
$$
In particular, as $\max\limits_{(a,b)\in K_N}b < \max\limits_{(a,b)\in K}b$, 
we obtain
$$f_\theta(\lambda_1(\xi),\lambda_2(\xi)) < \max\limits_{(a,b)\in K_N}f_\theta (a,b),
\,\,\forall\,\, \theta\in [0,\max\limits_{(a,b)\in K_N}b]
$$
which proves that $\xi \in (C_N)^{rc}$.
As $C_N$ is a compact subset of $U_N$ we have, therefore, shown (\ref{limitelemme}).

To complete the proof we notice that the sequence
$$(U_N)^{rc}, U_{N+1}, U_{N+2},...
$$
is an in-approximation of $E$. Indeed, as $U_N$ is open $(U_N)^{rc}$ is also open so all
the sets of the above sequence are open.
Since, by construction (cf. Theorem~\ref{inapproximationteoexistence}), 
$U_N \subseteq (U_{N+1})^{rc}$ we conclude that
$(U_{N})^{rc} \subseteq ((U_{N+1})^{rc})^{rc} = (U_{N+1})^{rc}$,
by Theorem~\ref{Erc=Epc}, (ii).
Moreover, if $\xi \in \textnormal{int} E^{rc}$ then
$\xi \in (U_N)^{rc}$, by
(\ref{limitelemme}). 
\end{proof}

\section{Existence Theorem}
In this section we will prove Theorem 
\ref{teorema d'esistenza nel nostro caso sverakmuller}.
We will assume that the boundary datum $\varphi$ is 
$C^1_{piec}(\overline{\Omega}, \Real^2)$, 
that is to say, $\varphi \in W^{1,\infty}({\Omega}, \Real^2)$, there exist open sets
$\omega_i\subset \Omega$ such that 
$\varphi \in C^1(\overline{\omega_i},\Real^2)$
and $\displaystyle \Omega\setminus \bigcup\limits_{i}\omega_i$ is a set of 
Lebesgue measure zero.

We recall the notion of fine $C^0$-approximation which can be found in \cite{MS}.

\begin{defn}
Let $\mathcal{F}(\Omega,\Real^N)$ be a family of continuous mappings of $\Omega$ into
$\Real^N$. We say that a given continuous mapping $v_0 : \Omega \to \Real^N$ admits
a fine $C^0$-approximation by the family $\mathcal{F}(\Omega,\Real^N)$ if there
exists, for every continuous function $\varepsilon : \Omega \to (0,+\infty)$, an
element $v$ of the family $\mathcal{F}(\Omega,\Real^N)$ such that
$|v(x) - v_0(x)| < \varepsilon(x)$ for each $x \in \Omega$.
\end{defn}

The proof of Theorem~\ref{teorema d'esistenza nel nostro caso sverakmuller}
is based on the following abstract theorem of M\"uller and \v{S}ver\'ak \cite{MS}.

\begin{thm}\label{existence sverak-muller}
Let $\Omega \subset \Real^n$ be an open, bounded set and let 
$E \subseteq \Real^{N\times n}$ be a compact
set which admits an in-approximation by the open sets $U_i$. Let 
$\varphi:\Omega \to \Real^N$ be a 
$C^1$ function such that $D \varphi \in U_1$. Then 
$\varphi$ admits a fine $C^0$-approximation by Lipschitz mappings
$u:\Omega \to \Real^N$
such that $D u \in E$ a.e. in $\Omega.$
\end{thm}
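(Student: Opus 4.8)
The plan is to prove this statement by the method of convex integration, realized through a Baire category argument in a complete metric space of piecewise affine subsolutions. First I would make two harmless reductions. Since $E$ is compact and $\sup_{\xi\in U_i}\mathrm{dist}(\xi,E)\to 0$, all but finitely many $U_i$ lie in a fixed ball, and by discarding initial terms I may assume every $U_i$ is contained in a ball $B_M\subset\Real^{N\times n}$; consequently any map whose gradient lies in some $U_i$ is $M$-Lipschitz, which supplies the equicontinuity needed below. Fixing the continuous $\eps:\Omega\to(0,+\infty)$ from the definition of fine $C^0$-approximation, I would then define
\[
X=\overline{\left\{u\in\varphi+W^{1,\infty}_0(\Omega,\Real^N):u\ \text{piecewise affine},\ Du\in U_1\ \text{a.e.},\ |u-\varphi|<\tfrac12\eps\ \text{on}\ \Omega\right\}}
\]
with the closure taken in the topology of uniform convergence and $X$ metrized by $d(u,v)=\sup_\Omega|u-v|$. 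This set is nonempty because $D\varphi\in U_1$ and $U_1$ is open: a sufficiently fine piecewise affine interpolant of the $C^1$ map $\varphi$ has gradient in $U_1$ and stays in the $\tfrac12\eps$-tube. It is a complete metric space because a uniform limit of $M$-Lipschitz maps agreeing with $\varphi$ on $\partial\Omega$ is again Lipschitz and satisfies the boundary condition.

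The analytic heart is a one-step attainment lemma that converts the inclusion $U_i\subseteq U_{i+1}^{rc}$ into geometry: given a piecewise affine $v\in X$ with $Dv\in U_i$ and any $\eta>0$, I would produce a piecewise affine $\tilde v$ with the same boundary values, $\|\tilde v-v\|_\infty<\eta$, $D\tilde v\in U_{i+1}$ a.e., and a definite lower bound $\int_\Omega|D\tilde v-Dv|\,dx\ge c\int_\Omega\mathrm{dist}(Dv,E)\,dx$ for some $c>0$. On each affine piece, where $Dv\equiv A\in U_i\subseteq U_{i+1}^{rc}$, I would write $A$ as an iterated rank one convex combination (a laminate) of matrices in a compact subset of $U_{i+1}$, and realize each elementary splitting $A=tA_1+(1-t)A_2$ with $\mathrm{rank}(A_1-A_2)=1$ by the standard fine ``stripe'' construction: a uniformly small piecewise affine perturbation whose gradient equals $A_1$ and $A_2$ on most of the piece and stays on the rank one segment $[A_1,A_2]$ in thin transition layers. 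Iterating through the laminate, with the transition layers of total measure arbitrarily small, yields $D\tilde v\in U_{i+1}$ a.e.; since the new gradients are spread across $U_{i+1}$, the oscillation forces the stated $L^1$ change of gradient. This lemma is exactly where rank one convexity of the hulls and Gromov's convex integration idea enter, and it is the step I expect to be the main obstacle, both in arranging that every intermediate rank one combination remains in the open set $U_{i+1}$ and in bookkeeping the transition layers.

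With the lemma in hand I would avoid delicate almost-everywhere gradient bookkeeping by invoking Baire category. The gradient operator $T:X\to L^1(\Omega,\Real^{N\times n})$, $T(u)=Du$, is of Baire class one, being the pointwise (in $L^1$) limit of the continuous maps $u\mapsto u*D\rho_k$ for a mollifier $\rho_k$; hence its set $R$ of points of continuity is residual, in particular dense and nonempty, in the complete space $X$. I then claim every continuity point solves the inclusion. Suppose $u\in R$ but $\mathrm{dist}(Du,E)\ge\delta$ on a set of positive measure. Choosing piecewise affine subsolutions $v_k\to u$ in $X$, continuity of $T$ at $u$ gives $Dv_k\to Du$ in $L^1$, so $\int_\Omega\mathrm{dist}(Dv_k,E)\,dx$ stays bounded below by a positive constant. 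Applying the one-step lemma to each $v_k$ produces $\tilde v_k$, as close to $v_k$ in $C^0$ as we wish and hence with $\tilde v_k\to u$, satisfying $\int_\Omega|D\tilde v_k-Dv_k|\,dx\ge c\int_\Omega\mathrm{dist}(Dv_k,E)\,dx$. But $v_k,\tilde v_k\to u$ with $u\in R$ force $\|Dv_k-D\tilde v_k\|_{L^1}\to0$, a contradiction. Hence $Du\in E$ a.e.\ for every $u\in R$.

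Finally I would read off the conclusion. Any $u\in R$ is an $M$-Lipschitz map in $\varphi+W^{1,\infty}_0(\Omega,\Real^N)$ with $Du\in E$ a.e., and by construction it lies in the $\tfrac12\eps$-tube, so $|u-\varphi|<\eps$ on $\Omega$ and $u=\varphi$ on $\partial\Omega$. Since this can be carried out for every prescribed continuous $\eps$, the residual set $R$ furnishes precisely the asserted fine $C^0$-approximation of $\varphi$ by Lipschitz solutions of $Du\in E$.
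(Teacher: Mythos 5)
You should note first that the paper does not prove this statement at all: it is quoted from M\"uller and \v{S}ver\'ak \cite{MS}, whose own proof is an explicit iteration producing maps $u_i$ with $Du_i\in U_i$ and with $\|u_{i+1}-u_i\|_{C^0}$ made small relative to mollification scales so that $Du_i$ converges almost everywhere into $E$. Your Baire-category scheme is the Kirchheim/Dacorogna--Marcellini style alternative and is viable in principle, but as written it has a genuine gap at its analytic heart. In the one-step lemma you realize a matrix $A\in U_{i+1}^{rc}$ as ``an iterated rank one convex combination (a laminate) of matrices in a compact subset of $U_{i+1}$''. Since the barycenter of a finite-order laminate supported in $U_{i+1}$ lies in the lamination hull (reverse the splitting tree), this amounts to claiming $U^{rc}\subseteq\operatorname*{Rco}U$ for open sets $U$ --- and Section 2 of this paper explicitly records that, in general, $U^{rc}\supsetneq\operatorname*{Rco}U$. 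Concretely, thicken Tartar's four-matrix configuration $T_4$ (four diagonal matrices in $\Real^{2\times 2}$ with no rank-one connections but nontrivial functional hull): for a small $\eps$-neighbourhood $U$ of the four points, $\operatorname*{Rco}U=U$ because no rank-one connections appear between the four balls, yet $U^{rc}$ contains the whole $T_4$ hull; its interior points are barycenters only of \emph{infinite}-order (staircase) laminates, so no finite sequence of stripe constructions can produce gradients in $U$ with the corresponding affine boundary values. Bridging this gap is exactly the content of the M\"uller--\v{S}ver\'ak refinement of Gromov: one needs laminates viewed as probability measures, Pedregal's approximation theorem (finite-order laminates with the same barycenter, supports only in a neighbourhood of the \emph{hull}, and mass concentrating near the original support), and a further iteration that re-processes the residual small mass whose gradients are not yet in $U_{i+1}$, yielding countably piecewise affine maps. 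Without this, your lemma only covers the hypothesis $U_i\subseteq\operatorname*{Rco}U_{i+1}$, strictly weaker than the stated in-approximation with $U_i\subseteq U_{i+1}^{rc}$.

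There are also two structural defects in the Baire scaffolding, both repairable but real. First, discarding the initial sets $U_i$ to gain boundedness destroys the only hypothesis you have on the datum, namely $D\varphi\in U_1$ (nothing puts $D\varphi$ in $U_2$, and $U_1$ itself need not be bounded); the fix is to perform one improvement step before setting up $X$. Second, and more seriously for your contradiction argument: your space $X$ is built from maps with $Du\in U_1$, but the improved maps $\tilde v_k$ satisfy $D\tilde v_k\in U_2$, and in general $U_2\not\subseteq U_1$ --- indeed, in this paper's own in-approximation (Definition 4.2) the sets $U_n$ are essentially disjoint. Hence $\tilde v_k\notin X$ and continuity of $T$ at $u$ cannot be invoked along $\tilde v_k$. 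The standard repair is to take $X$ to be the closure of piecewise affine maps with $Du\in U_i$ a.e.\ \emph{for some} $i$, and in the contradiction step to push $v_k$ up to a stage $j$ so large that $\sup_{\eta\in U_j}\mathrm{dist}(\eta,E)\le\delta_0/(2|\Omega|)$; the pointwise inequality $|D\tilde v-Dv|\ge\mathrm{dist}(Dv,E)-\mathrm{dist}(D\tilde v,E)$ then gives the lower bound you need, whereas your multiplicative bound $c\int_\Omega\mathrm{dist}(Dv,E)\,dx$ is neither needed nor obviously available at a fixed early stage. (A smaller point: since $D\varphi\in U_1$ only pointwise on $\Omega$, the nonemptiness of $X$ and the piecewise affine approximation require an exhaustion of $\Omega$ by compacts on which $D\varphi$ ranges in a compact subset of $U_1$.) With these repairs and with Pedregal's theorem supplying the realization lemma, your route does prove the theorem, by a genuinely different mechanism (residuality in a complete metric space) from the controlled-convergence iteration of \cite{MS} that the paper cites.
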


In the following result we begin by considering the case where $\varphi$ is an 
affine function.

\begin{thm}\label{affine}
Let $\Omega$ be an open, bounded subset of $\Real^2$ and let $E$ be the set
defined by (\ref{notreE}) and (\ref{Knumerofinitodipunti}). Let
$\xi \in \Real^{2\times 2}$ be such that 
$\xi \in \textnormal{int}\,E^{rc}$ and let $\varphi: \Omega\to \Real^2$ satisfy
$D \varphi=\xi$ in $\Omega$. Then there exists 
$u\in \varphi+W^{1,\infty}_0(\Omega,\Real^2)$ such that $Du \in E$ a.e. in $\Omega$.
\end{thm}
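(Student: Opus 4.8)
The plan is to obtain the solution directly from the two abstract ingredients already at our disposal: the strong in-approximation result of Theorem~\ref{datoalbordo} and the convex-integration existence theorem of M\"uller and \v{S}ver\'ak (Theorem~\ref{existence sverak-muller}). The only genuine work lies in converting the \emph{fine $C^0$-approximation} produced by the latter into the exact Dirichlet boundary condition $u \in \varphi + W^{1,\infty}_0(\Omega,\Real^2)$.

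First I would use the hypothesis $\xi \in \textnormal{int}\,E^{rc}$ to apply Theorem~\ref{datoalbordo}: this yields an in-approximation sequence $\{U_n\}$ of the compact set $E$ with the special feature that $\xi \in U_1$. Since $\varphi$ is affine we have $\varphi \in C^1(\Omega,\Real^2)$ with $D\varphi \equiv \xi \in U_1$, so the hypotheses of Theorem~\ref{existence sverak-muller} are met. That theorem then guarantees that $\varphi$ admits a fine $C^0$-approximation by Lipschitz maps $u:\Omega\to\Real^2$ with $Du\in E$ a.e.\ in $\Omega$; concretely, for every continuous $\varepsilon:\Omega\to(0,+\infty)$ there is such a $u$ with $|u(x)-\varphi(x)|<\varepsilon(x)$ for all $x\in\Omega$.

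The key step --- and the one I expect to be the main obstacle --- is that Theorem~\ref{existence sverak-muller} is an interior statement and says nothing directly about boundary values, whereas we need $u=\varphi$ on $\partial\Omega$. To force this I would choose the gauge $\varepsilon$ to vanish at the boundary, for instance $\varepsilon(x)=\min\{1,\textnormal{dist}(x,\partial\Omega)\}$, which is continuous and strictly positive on the open set $\Omega$. With this choice the approximating map $u$ satisfies $|u(x)-\varphi(x)|<\textnormal{dist}(x,\partial\Omega)$ throughout $\Omega$, while still enjoying $Du\in E$ a.e.

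Finally I would upgrade this into the desired boundary condition using the regularity of $u$. Since $u$ is Lipschitz on the bounded set $\Omega$ it is uniformly continuous and hence extends continuously to $\overline{\Omega}$; the same is trivially true of the affine map $\varphi$. For any $x_0\in\partial\Omega$, letting $x\to x_0$ with $x\in\Omega$ we have $|u(x)-\varphi(x)|<\textnormal{dist}(x,\partial\Omega)\to 0$, so the continuous extension of $u$ agrees with $\varphi$ at $x_0$. Thus $u-\varphi$ is a Lipschitz map vanishing on $\partial\Omega$, which gives $u-\varphi\in W^{1,\infty}_0(\Omega,\Real^2)$, that is $u\in\varphi+W^{1,\infty}_0(\Omega,\Real^2)$ with $Du\in E$ a.e.\ in $\Omega$, as required.
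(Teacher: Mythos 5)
Your proposal is correct and takes essentially the same route as the paper's proof: apply Theorem~\ref{datoalbordo} to obtain an in-approximation with $\xi \in U_1$, invoke Theorem~\ref{existence sverak-muller}, and then recover the exact boundary condition by choosing a gauge $\varepsilon$ vanishing on $\partial\Omega$ and passing to the limit at boundary points. The only difference is cosmetic: the paper takes an abstract continuous $\varepsilon:\overline{\Omega}\to[0,+\infty)$ vanishing precisely on $\partial\Omega$, while you use the explicit gauge $\min\left\{1,\textnormal{dist}(x,\partial\Omega)\right\}$, which is an instance of the same device.
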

\begin{proof}
Given $\xi \in \textnormal{int}\,E^{rc}$ by Theorem~\ref{datoalbordo}
there exists an in-appoximation sequence $U_n$ for $E$ such that $\xi \in U_1$.
Thus, if $\varphi: \Omega\to \Real^2$ is a $C^1$ mapping such that 
$D \varphi=\xi$, by Theorem~\ref{existence sverak-muller},
$\varphi$ admits a fine $C^0$-approximation by Lipschitz mappings
$u:\Omega \to \Real^2$ satisfying $D u \in E$ a.e. in $\Omega$.
Hence for every continuous function $\varepsilon : \Omega \to (0,+\infty)$
there exists $u \in W^{1,\infty}({\Omega}, \Real^2)$ such that $D u \in E$ a.e. in
$\Omega$ and
$$|u(x) - \varphi(x)| < \varepsilon(x), \; \; \forall \, x \in \Omega.$$
Let
$\varepsilon : \overline{\Omega} \to [0,+\infty)$ be a continuous function such
that $\varepsilon(x) = 0 \Leftrightarrow x \in \partial \Omega$ and
extend $\varphi$ as a $C^1$ mapping to $\overline{\Omega}$.
Let 
$x_0 \in \partial \Omega$ and $x_n \in \Omega$ be a sequence such that
$x_n \to x_0$. Passing to the limit the inequality
$$|u(x_n) - \varphi(x_n)| < \varepsilon(x_n)$$
we conclude that $u \in \varphi + W^{1,\infty}_0({\Omega}, \Real^2)$.
\end{proof}

To obtain our existence result in the general case we will once again make use of 
Theorem~\ref{datoalbordo} together with the following result, proved by
Dacorogna and Marcellini in \cite{DM} (Corollary 10.15).

\begin{thm} Let $\Omega$ be an open subset of $\Real^n$ and $A$ be an open subset of
$\Real^{N \times n}$. Let 
$\varphi \in C^1(\Omega, \Real^N)\cap W^{1,\infty}(\Omega, \Real^N)$ be such that
$$D\varphi(x)\in A, \, \forall\,\, x \in \Omega.$$
Then there exists a function $v\in W^{1,\infty}(\Omega, \Real^N)$
such that $v$ is piecewise affine in $\Omega$, $v=\varphi$ on $\partial \Omega$
and $D v\in A$ a.e. in $\Omega.$
\end{thm}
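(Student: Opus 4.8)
The plan is to realize $v$ as the continuous piecewise affine interpolant of $\varphi$ on a sufficiently fine, shape-regular triangulation of $\Omega$ that is refined towards $\partial\Omega$. The guiding observation is that, since $A$ is open and $D\varphi(x)\in A$ for every $x\in\Omega$, the quantity
\[
\delta(x):=\operatorname{dist}(D\varphi(x),\partial A)>0\quad\text{for all }x\in\Omega,
\]
and on each compact subset of $\Omega$ it is bounded below. On a simplex $S=[x_0,\dots,x_n]\subset\Omega$ the interpolant $v$ has constant gradient $Dv|_S$ determined by $Dv|_S(x_i-x_0)=\varphi(x_i)-\varphi(x_0)$; writing $\varphi(x_i)-\varphi(x_0)=\int_0^1 D\varphi(x_0+t(x_i-x_0))(x_i-x_0)\,dt$ and using the continuity of $D\varphi$, one obtains an estimate of the form $\|Dv|_S-D\varphi(x_0)\|\le C_S\,\omega(\operatorname{diam}S)$, where $\omega$ is a local modulus of continuity of $D\varphi$ and $C_S$ depends only on the shape-regularity of $S$. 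Hence, if the simplices are small enough relative to $\delta$, each constant gradient $Dv|_S$ lands in $A$.

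First I would fix the triangulation. Because $\Omega$ is an arbitrary open set, I would use a Whitney-type decomposition into shape-regular simplices whose diameters obey two competing constraints: geometrically, $\operatorname{diam}S\lesssim\operatorname{dist}(S,\partial\Omega)$, so that the mesh refines at the boundary; and analytically, $\operatorname{diam}S$ is small enough that $C_S\,\omega(\operatorname{diam}S)<\delta(x)$ for $x\in S$, which keeps $Dv|_S$ inside $A$. Such a locally finite, shape-regular triangulation exists for any open set, and defining $v$ as the piecewise affine map interpolating $\varphi$ at all vertices produces a continuous function that is affine on each simplex.

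It then remains to check the three conclusions. The inclusion $Dv\in A$ a.e. follows from the gradient estimate above together with the analytic size constraint, noting that $Dv$ is constant on the interior of each simplex and these interiors exhaust $\Omega$ up to a null set. The bound $v\in W^{1,\infty}(\Omega,\Real^N)$ holds because on a shape-regular mesh the interpolation operator is bounded on the space of Lipschitz maps, so $\|Dv\|_{L^\infty}\le C\|D\varphi\|_{L^\infty}$, and $v$ is continuous. Finally, to obtain $v=\varphi$ on $\partial\Omega$ I would use that $\varphi$, being in $W^{1,\infty}$, is Lipschitz and hence uniformly continuous on $\overline{\Omega}$; on a simplex $S$ adjacent to the boundary one has $|v(x)-\varphi(x)|\le\operatorname*{osc}_S\varphi\le L\,\operatorname{diam}S$, and since $\operatorname{diam}S\to0$ as $S$ approaches $\partial\Omega$ by the geometric constraint, $|v-\varphi|\to0$ at $\partial\Omega$; thus $v-\varphi$ extends continuously by zero on $\partial\Omega$, giving $v-\varphi\in W^{1,\infty}_0(\Omega,\Real^N)$.

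The hard part will be the construction and control of the adapted triangulation near $\partial\Omega$: there the two size constraints compete, because $\delta(x)$ may degenerate to $0$ as $x\to\partial\Omega$ (nothing prevents $D\varphi$ from approaching $\partial A$ at the boundary), forcing the analytic constraint to shrink the mesh possibly faster than the geometric one. The decomposition must therefore honour whichever constraint is more restrictive at each location while remaining shape-regular and locally finite, and one must verify that the resulting $v$ still has uniformly bounded gradient. Handling a possibly irregular boundary $\partial\Omega$ simultaneously with these two refinement requirements is the genuine technical core; once the triangulation is in place, verifying $Dv\in A$, the $W^{1,\infty}$ bound, and the boundary condition is routine.
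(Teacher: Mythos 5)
Your sketch is sound, and it reconstructs what is essentially the canonical argument: the paper itself offers no proof of this statement — it quotes it from Dacorogna--Marcellini \cite{DM} (Corollary 10.15) — and the proof there proceeds exactly along your lines, namely affine interpolation of $\varphi$ on a fine, locally finite triangulation refined toward $\partial\Omega$, with the mesh size dictated both by the Whitney-type geometric constraint and by the local modulus of continuity of $D\varphi$ measured against the distance from $D\varphi(x)$ to the complement of the open set $A$. The only point your outline leaves implicit is that the locally adapted refinement must stay conforming (no hanging nodes) so that the vertex interpolant is continuous across faces; this is handled by standard Whitney-decomposition and simplicial-subdivision constructions and does not affect the correctness of your approach.
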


We now proceed with the proof of 
Theorem~\ref{teorema d'esistenza nel nostro caso sverakmuller}.

\begin{proof}
Assume first that $\varphi\in C^1(\overline{\Omega},\Real^2)$. 
Let
$$\Omega_0 = \left\{x \in \Omega : \, D\varphi(x) \in E\right\}$$
and $\Omega_1 = \Omega \setminus \Omega_0$. 
Since $E$ is closed and $\varphi$ is $C^1$, the set $\Omega_0$ is closed
and thus $\Omega_1$ is open.
In $\Omega_1$ we apply the previous theorem to $\varphi$ and to the open set
$\textnormal{int}\,E^{rc}$
in order to obtain a map $v\in W^{1,\infty}(\Omega_1,\Real^2)$
such that $v=\varphi$ on $\partial \Omega_1$, 
$D v =c_i$ in $\Omega_1^i$ for some constant $c_i \in \textnormal{int}\,E^{rc}$ 
and $\bigcup\limits_i\Omega_1^i=\Omega_1$. 
Due to Theorem~\ref{affine} we can solve the problem
$$\left\{
\begin{array}{ll}
D u \in E, \, &\mbox{a.e. in } \Omega_1^i \vspace{0.1cm}
\\
u(x)=v(x), &x \in \partial \Omega_1^i 
\end{array}
\right.
$$ 
in each set $\Omega_1^i$. Denoting by $u_i$ the solution in
$\Omega_1^i$, the map defined by 
$$u = \left\{
\begin{array}{ll}
u_i, \, &\mbox{ in } \Omega_1^i \vspace{0.1cm}
\\
\varphi, &\mbox{ in } \Omega_0 
\end{array}
\right.
$$ 
belongs to $\varphi+W_0^{1,\infty}(\Omega,\Real^2)$ and
satisfies $Du \in E$.

Now suppose that $\varphi\in C^1_{piec}(\overline{\Omega},\Real^2)$.
This means that there exist open sets
$\omega_i\subset \Omega$ such that 
$\varphi \in C^1(\overline{\omega_i},\Real^2)$
and $\displaystyle \Omega\setminus \bigcup\limits_{i}\omega_i$ is a set of Lebesgue 
measure zero.
By the first case, for each $i$, there exists 
$w_i \in \varphi+W_0^{1,\infty}(\omega_i,\Real^2)$ such that 
$Dw_i \in E$ a.e. in $\omega_i$.
Thus, the function $u$ defined as $w_i$ in $\omega_i$ belongs to 
$\varphi+W_0^{1,\infty}(\Omega,\Real^2)$ and satisfies $Du \in E$,
a.e. in $\Omega$.
\end{proof}

We conclude this article by pointing out that 
Theorem~{\ref{teorema d'esistenza nel nostro caso sverakmuller}} 
is not far from being optimal 
in the case where the boundary datum $\varphi$ is affine.

Indeed, suppose that $u$ is a solution of
$$
\left\{
\begin{array}{ll}
Du \in E,& \text{a.e.  in} \,\Omega
\\
u=u_{\xi_0},& \text{on } \partial\Omega
\end{array}
\right.
$$
where $u_{\xi_0}$ is an affine function with $Du_{\xi_0} = \xi_0$. 
Then there exists a map 
$\psi \in W^{1,\infty}_0(\Omega,\Real^2)$ such that $u=u_{\xi_0}+\psi$.
Let $f \in \mathcal{F}^E$ be a polyconvex function. Then $f$ is also 
quasiconvex and thus 
$$
f(\xi_0)\leq \frac{1}{|\Omega|}\int_{\Omega}f(\xi_0+D\psi) \, dx 
=\frac{1}{|\Omega|}\int_{\Omega}f(Du)\, dx  \leq 0
$$
since $f\lfloor_{E}\leq 0$. This implies that $\xi_0 \in E^{pc}$.
As already mentioned at the end of Section 3, in the case where $E$ is an 
isotropic compact subset of 
$\Real^{2\times 2}$, results of \cite{C} and \cite{CT}
imply that $\text{Rco}\,E= E^{rc}= E^{pc}$.
Therefore $\xi_0 \in E^{rc}$.

\section*{Acknowledgements}
The research of Ana Cristina Barroso was partially supported by
Funda\c c\~ao para a Ci\^encia e Tecnologia, Financiamento Base
2009-ISFL/1/209.
Ana Margarida Ribeiro was partially supported by
Financiamento Base 2009-ISFL/1/297 from FCT/MCTES/PT.




\begin{thebibliography}{1}

\bibitem{BJ}
J. M. Ball  and R. D. James,
{``Fine phase mixtures as minimizers of energy"},
{\it Arch. Rational Mech. Anal.}, {vol. 100}, pp. 13-52, 1987.

\bibitem{CT}
{P. Cardaliaguet  and R. Tahraoui,
{``Equivalence between rank-one convexity and polyconvexity for
isotropic sets of $\mathbb R\sp {2\times 2}$ I"},
{\it Nonlinear Anal.}, {vol. 50}, pp. 1179-1199, 2002.}

\bibitem{Cthesis}
{G. Croce, {``Sur quelques inclusions diff\'erentielles de premier et de 
deuxi\`eme ordre"}, PhD thesis, EPFL, 2004.}

\bibitem{C}
{G. Croce, {``A differential inclusion: the case of an isotropic set"},
{\it ESAIM Control Optim. Calc. Var.}, {vol. 11}, 
pp. 122-138, 2005.}

\bibitem{D}
{B. Dacorogna, {\it Direct methods in the calculus of variations}, Second edition,
{Applied Mathematical Sciences, 78}, Springer, New York, 2008.}

\bibitem{DMacta}
{B. Dacorogna and P. Marcellini, {``General existence theorems for 
{H}amilton-{J}acobi equations in the scalar and vectorial cases"},
{\it Acta Math.}, {vol. 178}, 
pp. 1-37, 1997.}

\bibitem{DMtwowell}
{B. Dacorogna and P. Marcellini, {``Cauchy-{D}irichlet problem for first 
order nonlinear systems"}, {\it J. Funct. Anal.}, {vol. 152}, 
pp. 404-446, 1998.}

\bibitem{DM}
{B. Dacorogna and P. Marcellini, {\it Implicit partial differential 
equations}, Birkh\"auser, Boston, 1999.}

\bibitem{DP}
{B. Dacorogna and G. Pisante,
{``A general existence theorem for differential inclusions
in the vector valued case"}, {\it Port. Math.}, {vol. 62}, 
pp. 421-436, 2005.}

\bibitem{DR}
{B. Dacorogna and A.M. Ribeiro, {``On some definitions and properties
of generalized convex sets arising in the calculus of variations"}, in {\it Recent Advances on Elliptic
and Parabolic Issues, Proceedings of the 2004 Swiss-Japanese Seminar}, World Scientific Publishing, pp. 103-128, 2006.}

\bibitem{dolzmann}
{G. Dolzmann and S. M{\"u}ller, {Microstructures with finite surface energy: 
the two-well problem"},
{\it Arch. Rational Mech. Anal.}, {vol. 132}, 
pp. 101-141, 1995.}

\bibitem{Gromov}
{M. Gromov, {\it Partial differential relations}, vol. 9, {Ergebnisse der Mathematik
und ihrer Grenzgebiete (3)} {[Results in Mathematics and Related
Areas (3)]}, Springer-Verlag, Berlin, 1986.}

\bibitem{HJ}
{R.A. Horn and Ch. R. Johnson,
{\it Topics in matrix analysis}, Cambridge University Press, Cambridge, 1991.}

\bibitem{kolar}
{J. Kol{\'a}{\v{r}},  {``Non-compact lamination convex hulls"}, {\it Ann. Inst. H. Poincar\'e Anal. 
Non Lin\'eaire}, {vol. 20}, 
pp. 391-403, 2003.}


\bibitem{M}
{S. M{\"u}ller, \emph{Variational models for microstructure and phase transitions},
in {Calculus of variations and geometric evolution problems
(Cetraro, 1996)}, Lecture Notes in Math.,
Springer, Berlin, vol. 1713, pp. 85-210, 1999.}

\bibitem{MS} 
{S. M{\"u}ller  and V. {\v{S}}ver{\'a}k,
{``Attainment results for the two-well problem by convex integration"}, in
{\it Geometric analysis and the calculus of variations}, pp. 239-251,
Internat. Press, Cambridge, MA, 1996.}

\bibitem{MSannals} 
{ S. M{\"u}ller and V. {\v{S}}ver{\'a}k,
{``Convex integration for {L}ipschitz mappings and
counterexamples to regularity"}, {\it Ann. of Math.}, 
{vol. 157},
pp. 715-742, 2003.}

 
\end{thebibliography}
\end{document}